\newtheorem{theorem}{Theorem}[section]
\newtheorem{corollary}[theorem]{Corollary}
\newtheorem{lemma}[theorem]{Lemma}
\newtheorem{proposition}[theorem]{Proposition}
\theoremstyle{remark}
\newtheorem{definition}[theorem]{Definition}
\newtheorem{remark}[theorem]{Remark}
\newcommand{\CB}{\mathcal{B}}
\newcommand{\CM}{\mathcal{M}}
\newcommand{\CX}{\mathcal{X}}
\newcommand{\CY}{\mathcal{Y}}
\newcommand{\Hom}{\mathrm{Hom}}
\newcommand{\Ker}{\mathrm{Ker}}
\newcommand{\Coker}{\mathrm{Coker}}
\newcommand{\Ext}{\mathrm{Ext}}
\newcommand{\End}{\mathrm{End}}
\newcommand{\Cogen}{\mathrm{Cogen}}
\newcommand{\presr}{\mathrm{pres}\text{-}}
\newcommand{\presl}{\text{-}\mathrm{pres}}
\newcommand{\Refl}{\mathrm{ Refl}}
\newcommand{\Add}{\mathrm{ Add}}
\newcommand{\add}{\mathrm{ add}}
\newcommand{\CoDef}{\mathrm{CoDef}}
\newcommand{\Rej}{\mathrm{Rej}}
\newcommand{\Modr}{\mathrm{ Mod}\text{-}}
\newcommand{\Mod}{\text{-}\mathrm{ Mod}}
\newcommand{\modr}{\mathrm{ mod}\text{-}}
\newcommand{\lmod}{\text{-}\mathrm{ mod}}
\begin{document}

\title{Finitely Cosilting Modules}

\author{Flaviu Pop}

\address{Flaviu Pop: "Babe\c s-Bolyai" University, Faculty of Economics and Business Administration, str. T. Mihali, nr. 58-60, 400591, Cluj-Napoca, Romania}

\email{flaviu.v@gmail.com; flaviu.pop@econ.ubbcluj.ro}


\subjclass[2010]{16D90, 16E30, 16S90, 18E40, 18G15}

\keywords{cosilting module, silting module, $T$-cogenerated, torsion pair, cosilting theorem, duality.}

\begin{abstract} The notion of cosilting module was recently introduced as a generalization of the concept of cotilting module. In this paper, it is introduced the notion of finitely cosilting module, i.e. a cosilting module with some finiteness conditions, and is given a Cosilting Theorem induced by such a module.
\end{abstract}

\maketitle

\section{Introduction}\label{section_introduction} Silting theory is very important in the study of the representation theory. Silting objects in triangulated categories, introduced by Keller and Vossieck in \cite{Keller_Vossieck_1998} in order to study $t$-structures in the bounded derived category of representations of Dynkin quivers, are an important tool in the study of homotopy or derived categories. In \cite{Angeleri_Marks_Vitoria_2016}, it is introduced the notion of (partial) silting module that generalize the notion of tilting module over an arbitrary ring as well as the notion of support $\tau$-tilting module over a finite dimensional algebra (see \cite{AdachiIyamaReiten_2014}), and the authors show that these modules generate torsion classes that provide left approximations and that every partial silting module admits an analogue of the Bongartz complement. Moreover, they connect these modules with silting complexes, $t$-structures and co-$t$-structures in the derived module category. In \cite{AngeleriMarksVitoria_2016_epi}, the authors associate to every partial silting module a ring epimorphism which describe it explicitly as an idempotent quotient of the endomorphism ring of the Bongartz completion. Moreover, it is approached the particular case of hereditary rings. In \cite{Angeleri_Hrbek_2017}, Angeleri-Hugel and Hrbek, give a classification of silting modules over commutative rings, establishing a bijective correspondence with Gabriel filters of finite type. Breaz and Modoi study equivalences induced by silting objects in the (derived) module categories giving a Silting Theorem, this theorem extending the classical Tilting Theorem (see \cite{BreazModoi_2017_1} and \cite{BreazModoi_2017_2}).

The dual notion, of cositing module, was recently introduced in \cite{Breaz_Pop_2017} and \cite{Zhang_Wei_2017_1} as a generalization of the concept of cotilting module. In these papers are proved some characterizations and some important properties of this modules (for instance, the property that all cosilting modules are pure-injective). In \cite{Pop_2017} are characterized cosilting modules in terms of (two-term) cosilting complexes. Cosilting complexes were firstly introduced by Zhang and Wei in \cite{Zhang_Wei_2017_1} as a generalization of the concept of cotilting complex and also as dual of the concept of silting complex (i.e. semi-tilting complex, see \cite{Wei_2013}). A generalization of the notion of cosilting complex, introduced by Zhang and Wei, could be found in \cite{Pop_2017}.

These topics are intensively studied by many other authors (see, for example, \cite{Angeleri_preprint_2016}, \cite{BreazZemlicka}, \cite{MarksStovicek} and \cite{NicolasSaorinZvonareva_2016}).

Zhang and Wei introduce in \cite{Zhang_Wei_2016} and \cite{Zhang_Wei_2017_2} the notion of (hereditary) quasi-cotilting modules and they give a Quasi-Cotilting Theorem, as a generalization of the classical Cotilting Theorem. In \cite{Angeleri_2000_2}, Angeleri-Hugel studies cotilting modules with some finitness conditions defining the notion of finitely cotilting module. She studies some important properties of these modules and she gives a Cotilting Theorem induced by a finitely cotilting module. Following the idea used by Angeleri-Hugel in \cite{Angeleri_2000_2}, in this paper we approach the notion of cosilting module with some finitness conditions defining the concept of finitely cosilting (bi)module and we give a Cosilting Theorem induced by a finitely cosilting bimodule.

The paper is organized as follows. After the introduction which is given in Section \ref{section_introduction}, in Section \ref{section_preliminaries} are presented the notations and the conventions used throughout the paper and they are stated some results which are used in the next sections. In this section is also given a new characterization of cosilting modules (see Proposition \ref{characterization_cosilting}). In Section \ref{section_finitely_cosilting_modules} it is introduced the concept of finitely cosilting module and they are proved some important properties of this new notion. In Section \ref{section_finitely_cosilting_bimodules} it is defined the notion of finitely cosilting bimodule and it is given a Cosilting Theorem (see Theorem \ref{cosilting_theorem}), which is the most important result of the paper.

\section{Preliminaries}\label{section_preliminaries}
Throughout the paper, by a ring $R$ we will understand a unital associative ring, an $R$-module is a right $R$-module and we denote by $\Modr R$ (respectively, $R \Mod$) the category of all right (respectively, left) $R$-modules. We also denote by $\modr R$ (respectively, by $R \lmod$) the subcategory of $\Modr R$ (respectively, of $R \Mod$) consisting of finitely generated $R$-modules. For an $R$-module $T$, we denote by $\Add(T)$ (respectively, by $\add(T)$) the class of all $R$-modules which are isomorphic to direct summands of direct sums (respectively, of finite direct sums) of copies of $T$. Moreover, we consider the perpendicular class ${^{\perp}T}$ defined as ${^{\perp}T} = \{ X \in \Modr R \mid \Ext^{1}_{R}(X,T) = 0 \}$. If $X$ and $T$ are two $R$-modules, then we say that $X$ is cogenerated by $T$ if there is a monomorphism $0 \to X \overset{}\longrightarrow T^{I}$, for some set $I$, and we denote by $\Cogen(T)$ the class of all $T$-cogenerated $R$-modules.

If $R$ and $S$ are two rings and $T$ is an $(S,R)$-bimodule, then the $\Hom$'s contravariant functors induced by $T$, i.e. $\Delta_{R}(-) = \Hom_{R}(-,T) : \Modr R \rightleftarrows S \Mod : \Hom_{S}(-,T) = \Delta_{S}(-),$ are adjoint on the right. When the ring $R$ or $S$ are understood, we simply write $\Delta$. The natural transformations associated to this adjunction, $\delta : 1 \to \Delta^{2}$, are in fact the evaluation maps, $\delta_X : X \to \Delta^{2}(X), \text{   } \delta_{X}(x) : f \mapsto f(x),$ and they satisfy the identities $\Delta(\delta_{X}) \circ \delta_{\Delta(X)} = 1_{\Delta(X)}$, for all modules $X$. A module $X$ is called $\delta$-torsionless (respectively, $\delta$-reflexive) if $\delta_{X}$ is a monomorphism (respectively, an isomorphism). We denote by $\Refl_{\delta}$ the class of all $\delta$-reflexive modules. From the identities above, we observe that $\Delta(X)$ is $\delta$-torsionless, for any module $X$. If $X$ is a module and if we denote the reject of $X$ in $T$ by $\Rej_{T}(X)$, i.e. $\Rej_{T}(X) = \bigcap_{f\in\Hom_{R}(X,T)}\Ker(f)$, we have that $\Ker(\delta_X) = \Rej_{T}(X)$, hence $X$ is $\delta$-torsionless if and only if $X$ is $T$-cogenerated.

In order to define the notion of {\sl cosilting module}, we have to define the class $\CB_{\zeta}$, as follows. If $\zeta : Q_{0} \to Q_{1}$ is an $R$-homomorphism, then the class $\CB_{\zeta}$ is defined as $$\CB_{\zeta} = \{X \in \Modr R \mid \Hom_{R}(X,\zeta) \text{ is an epimorphism} \}.$$

\begin{remark} Let $\zeta : Q_{0} \to Q_{1}$ be an $R$-homomorphism. We define {\sl the codefect functor} as follows. For an object $X \in \Modr R$, we set $\CoDef_{\zeta}(X) = \Coker\Hom_{R}(X,\zeta)$ and for a morphism $f : X \to Y$ in $\Modr R$, we define $\CoDef_{\zeta}(f) : \CoDef_{\zeta}(Y) \to \CoDef_{\zeta}(X)$ by $\CoDef_{\zeta}(f) = \phi,$ where $\phi$ is given by the universal property of the cokernel, as we see in the following diagram.

\[\xymatrixcolsep{3pc}\xymatrixrowsep{3pc}
\xymatrix{
&\Hom_{R}(Y,Q_{0})\ar[r]^{\Hom_{R}(Y,\zeta)}\ar[d]_{\Hom_{R}(f,Q_{0})} &\Hom_{R}(Y,Q_{1})\ar[r]^{\pi_{Y}}\ar[d]^{\Hom_{R}(f,Q_{1})} &\Coker\Hom_{R}(Y,\zeta)\ar[r]\ar[d]^{\phi}&0\\
&\Hom_{R}(X,Q_{0})\ar[r]^{\Hom_{R}(X,\zeta)} &\Hom_{R}(X,Q_{1})\ar[r]^{\pi_{X}} &\Coker\Hom_{R}(X,\zeta)\ar[r]&0\\
}
\]

We observe that the class $\CB_{\zeta}$ is in fact the kernel of $\CoDef_{\zeta}$, i.e. $\Ker\CoDef_{\zeta} = \CB_{\zeta}$.
\end{remark}

The following lemma give some important (closure) properties associated to the class $\CB_{\zeta}$ and is very used throughout the paper.

\begin{lemma}\label{closure_prop}\cite[Lemma 2.3]{Breaz_Pop_2017} Let $\zeta: Q_{0} \to Q_{1}$ be an $R$-homomorphism between injective $R$-modules and let $T=\Ker(\zeta)$. Then the following statements hold:
\begin{enumerate}[{\rm (1)}]
   \item The class $\CB_{\zeta}$ is closed under direct sums, submodules and extensions.
   \item $\CB_{\zeta} \subseteq {^{\perp}T}$.
   \item If $0 \to A \overset{f}\longrightarrow B \overset{g}\longrightarrow X \to 0$ is an exact sequence such that $A$ and $B$ belong to the class $\CB_{\zeta}$ and $X \in {^{\perp}T}$ then $X\in\CB_{\zeta}$.
\end{enumerate}
\end{lemma}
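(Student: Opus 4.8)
The plan is to reduce everything to two facts: that $\zeta$ factors through its image, and that injectivity of $Q_0,Q_1$ kills $\Ext^1_R(-,Q_i)$, so that $\Hom_R(-,Q_i)$ is exact on short exact sequences. Write $\zeta=\iota p$ with $p:Q_0\twoheadrightarrow\Ima(\zeta)$ the corestriction and $\iota:\Ima(\zeta)\hookrightarrow Q_1$ the inclusion, giving the short exact sequence $0\to T\to Q_0\overset{p}\longrightarrow\Ima(\zeta)\to 0$. Applying $\Hom_R(X,-)$ and using $\Ext^1_R(X,Q_0)=0$ produces the exact sequence
\[
\Hom_R(X,Q_0)\overset{\Hom_R(X,p)}\longrightarrow\Hom_R(X,\Ima(\zeta))\longrightarrow\Ext^1_R(X,T)\longrightarrow 0,
\]
and since $\Hom_R(X,\iota)$ is monic, $X\in\CB_\zeta$ holds exactly when $\Hom_R(X,p)$ and $\Hom_R(X,\iota)$ are both epic. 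For (1) I would treat the three closures separately: direct sums are immediate from $\Hom_R(\bigoplus_i X_i,\zeta)\cong\prod_i\Hom_R(X_i,\zeta)$ and the fact that a product of epimorphisms is an epimorphism; for a submodule $Y\leq X\in\CB_\zeta$ and $g:Y\to Q_1$, injectivity of $Q_1$ extends $g$ to $X$, which then lifts along $\zeta$ since $X\in\CB_\zeta$, and restricting the lift to $Y$ finishes it; for an extension $0\to A\to B\to C\to 0$ with $A,C\in\CB_\zeta$, applying $\Hom_R(-,Q_0)$ and $\Hom_R(-,Q_1)$ yields a morphism of short exact sequences whose vertical maps are $\Hom_R(-,\zeta)$, and the "epi $+$ epi $\Rightarrow$ epi" diagram lemma forces the middle vertical to be epic.

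For (2), by the reduction above it is enough to show $\Ext^1_R(X,T)=0$ for $X\in\CB_\zeta$, i.e. that $\Hom_R(X,p)$ is epic. Given any $g:X\to\Ima(\zeta)$, the composite $\iota g:X\to Q_1$ lifts through $\zeta$ to some $h:X\to Q_0$ with $\zeta h=\iota g$; cancelling the monomorphism $\iota$ gives $ph=g$, so $\Hom_R(X,p)$ is epic and $X\in{^{\perp}T}$.

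For (3), I would argue by correcting a naive lift. Let $0\to A\overset{f}\longrightarrow B\overset{g}\longrightarrow X\to 0$ be exact with $A,B\in\CB_\zeta$, $X\in{^{\perp}T}$, and let $\psi:X\to Q_1$; I must lift $\psi$ along $\zeta$. Since $B\in\CB_\zeta$, choose $h:B\to Q_0$ with $\zeta h=\psi g$. Then $\zeta hf=\psi gf=0$, so $hf$ factors through the inclusion $j:T\hookrightarrow Q_0$, say $hf=jh'$ with $h':A\to T$. This is the point where $X\in{^{\perp}T}$ is used: applying $\Hom_R(-,T)$ to the sequence and using $\Ext^1_R(X,T)=0$ shows $\Hom_R(B,T)\to\Hom_R(A,T)$ is surjective, so $h'$ extends to some $k:B\to T$ with $kf=h'$. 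Now $h-jk:B\to Q_0$ kills $A$, hence factors as $\tilde h g$ for some $\tilde h:X\to Q_0$, and since $\zeta j=0$ and $g$ is epic one gets $\zeta\tilde h g=\zeta h=\psi g$, whence $\zeta\tilde h=\psi$. Thus $\Hom_R(X,\zeta)$ is epic and $X\in\CB_\zeta$.

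The hard part is exactly item (3): parts (1) and (2) are essentially formal consequences of the injectivity of the $Q_i$ together with the factorization of $\zeta$, whereas (3) is the only place where all the hypotheses ($A,B\in\CB_\zeta$, $X\in{^{\perp}T}$, and $T=\Ker\zeta$) are consumed simultaneously, and the nontrivial move is recognizing that the obstruction to lifting $\psi$ lives in $\Ext^1_R(X,T)$ and is therefore annihilated by the hypothesis on $X$. As an alternative packaging of (3), one may apply the snake lemma to the two $\Hom_R(-,Q_i)$-sequences attached to $0\to A\to B\to X\to 0$, identify $\Ker\Hom_R(-,\zeta)$ with $\Hom_R(-,T)$ and $\Coker\Hom_R(-,\zeta)$ with $\CoDef_\zeta(-)$, and read off $\CoDef_\zeta(X)=0$ directly from $\Ext^1_R(X,T)=0$.
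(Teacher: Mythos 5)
Your proof is correct. Note first that the paper itself does not prove this lemma; it is quoted from \cite[Lemma~2.3]{Breaz_Pop_2017}, so I can only evaluate your argument on its own terms rather than against a proof in the present text.

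Your structural reduction is the right one: factoring $\zeta = \iota p$ through $\Ima(\zeta)$ isolates the short exact sequence $0 \to T \to Q_0 \to \Ima(\zeta) \to 0$, and injectivity of $Q_0,Q_1$ does all the homological work. Each piece checks out. For (1): the direct-sum case uses $\Hom_R(\bigoplus X_i,\zeta) \cong \prod \Hom_R(X_i,\zeta)$ and closure of epis under products in $\mathrm{Ab}$; the submodule case uses injectivity of $Q_1$ to extend a map on $Y$ to $X$, lift it there, and restrict back; the extension case is the standard "outer verticals epi $\Rightarrow$ middle vertical epi" in a morphism of short exact sequences, available because $\Hom_R(-,Q_i)$ is exact. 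For (2): your observation that lifting $\iota g$ along $\zeta$ and cancelling the mono $\iota$ forces $\Hom_R(X,p)$ to be epic, hence $\Ext^1_R(X,T)=0$, is clean and correct. For (3): the lift-and-correct argument is exactly right; the key step of extending $h'\colon A\to T$ to $k\colon B\to T$ uses $\Ext^1_R(X,T)=0$ precisely where the hypothesis $X\in{^{\perp}T}$ is needed, and then $h-jk$ kills $A$ and so descends to the required $\tilde h\colon X\to Q_0$. Your closing remark that (3) can be repackaged via the snake lemma applied to the two $\Hom_R(-,Q_i)$-sequences, identifying the kernel and cokernel columns with $\Hom_R(-,T)$ and $\CoDef_\zeta(-)$, is a valid and arguably cleaner presentation, and aligns well with the paper's own emphasis on the functor $\CoDef_\zeta$ and the identity $\CB_\zeta=\Ker\CoDef_\zeta$.
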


Now, we give the definition of {\sl cosilting module}, introduced independently in \cite{Breaz_Pop_2017} and \cite{Zhang_Wei_2017_1}, dualizing the notion of {\sl silting module} introduced in \cite{Angeleri_Marks_Vitoria_2016} and, at the same time, generalizing the notion of {\sl cotilting module}.

\begin{definition} We say that an $R$-module $T$ is:
\begin{enumerate}[{(a)}]
   \item {\sl partial cosilting} ({\sl with respect to $\zeta$}), if there exists an injective copresentation of $T$ $$0 \to T \overset{f}\longrightarrow Q_{0} \overset{\zeta}\longrightarrow Q_{1}$$such that:
       \begin{enumerate}[(a)]
           \item[(1)] $T\in\CB_{\zeta}$, and
           \item[(2)] the class $\CB_{\zeta}$ is closed under direct products;
       \end{enumerate}
   \item {\sl cosilting} ({\sl with respect to $\zeta$}), if there exists an injective copresentation $$0 \to T \overset{f}\longrightarrow Q_{0} \overset{\zeta}\longrightarrow Q_{1}$$ of $T$ such that $\Cogen(T)=\CB_{\zeta}$.
\end{enumerate}
\end{definition}

We have the following characterization of cosilting modules.

\begin{proposition}\label{characterization_cosilting} Let $T$ be an $R$-module and let $T=\Ker(\zeta)$, where $\zeta:Q_{0} \to Q_{1}$ is a homomorphism between injective $R$-modules. Then $T$ is cosilting with respect to $\zeta$ if and only if
\begin{itemize}
   \item[(i)] $T^{I} \in \Ker\CoDef_{\zeta}$, for all sets $I$;
   \item[(ii)] $\Ker\Hom_{R}(-,T)\cap\Ker\CoDef_{\zeta}=0$.
\end{itemize}
\end{proposition}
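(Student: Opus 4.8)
The plan is to unwind the definition of cosilting, which says $\Cogen(T) = \CB_\zeta$, into the two conditions (i) and (ii). Recall from the preliminaries that, since $T = \Ker(\zeta)$ with $Q_0, Q_1$ injective, a module $X$ lies in $\Cogen(T)$ precisely when $\delta_X$ is a monomorphism, i.e. when $\Rej_T(X) = \Ker(\delta_X) = 0$; and $\Rej_T(X) = 0$ is equivalent to saying that $X \notin \Ker\Hom_R(-,T)$ unless $X = 0$ — more precisely, the condition $\Cogen(T) \cap \Ker\Hom_R(-,T) = 0$ is automatic and never the issue. The substantive content is the equality of the two classes $\Cogen(T)$ and $\CB_\zeta = \Ker\CoDef_\zeta$, and I would prove each inclusion separately, using Lemma \ref{closure_prop} throughout.

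First I would establish the forward direction. Assume $\Cogen(T) = \CB_\zeta$. For (i): every $T^I$ is trivially $T$-cogenerated (the identity gives a monomorphism $T^I \hookrightarrow T^I$), so $T^I \in \Cogen(T) = \CB_\zeta = \Ker\CoDef_\zeta$, which is exactly (i). For (ii): suppose $X \in \Ker\Hom_R(-,T) \cap \Ker\CoDef_\zeta$. Then $X \in \CB_\zeta = \Cogen(T)$, so $\delta_X$ is a monomorphism; but $\Ker(\delta_X) = \Rej_T(X) = X$ because $\Hom_R(X,T) = 0$. Hence $X = 0$, giving (ii).

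Next, the converse, which is the main work. Assume (i) and (ii); I must show $\Cogen(T) = \Ker\CoDef_\zeta$. The inclusion $\Cogen(T) \subseteq \Ker\CoDef_\zeta$: given $X \in \Cogen(T)$, pick a monomorphism $0 \to X \to T^I$; by (i), $T^I \in \Ker\CoDef_\zeta = \CB_\zeta$, and $\CB_\zeta$ is closed under submodules by Lemma \ref{closure_prop}(1), so $X \in \CB_\zeta$. For the reverse inclusion $\Ker\CoDef_\zeta \subseteq \Cogen(T)$: take $X \in \CB_\zeta$ and consider the canonical exact sequence $0 \to \Rej_T(X) \to X \to X/\Rej_T(X) \to 0$, where $\bar X := X/\Rej_T(X) \in \Cogen(T)$ (this is the standard fact that a module modulo its $T$-reject is $T$-cogenerated, since the intersection of the relevant kernels becomes zero). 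By the inclusion just proved, $\bar X \in \CB_\zeta$, so $\Rej_T(X) \in \CB_\zeta$ as well, because $\CB_\zeta$ is closed under submodules. Now I want to invoke condition (ii) to conclude $\Rej_T(X) = 0$, which forces $X = \bar X \in \Cogen(T)$; for that I need $\Rej_T(X) \in \Ker\Hom_R(-,T)$, i.e. $\Hom_R(\Rej_T(X), T) = 0$.

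The hard part — and the step I would scrutinize most carefully — is precisely this last point: showing that the reject submodule $\Rej_T(X)$ is itself killed by $\Hom_R(-,T)$. In general $\Rej_T(X)$ need not have vanishing $\Hom$ into $T$; one needs to use that $X \in \CB_\zeta$ and the specific relationship $T = \Ker(\zeta)$. I expect the argument to run as follows: apply $\Hom_R(-,-)$ with the injective copresentation $0 \to T \to Q_0 \xrightarrow{\zeta} Q_1$ in the second variable; a homomorphism $g : \Rej_T(X) \to T$ composes to $\iota g : \Rej_T(X) \to Q_0$ into an injective, which extends along the inclusion $\Rej_T(X) \hookrightarrow X$ to some $h : X \to Q_0$; since $\zeta h|_{\Rej_T(X)} = \zeta \iota g = 0$, and since $X \in \CB_\zeta$ means $\Hom_R(X,\zeta)$ is surjective — which one leverages to adjust $h$ by an element of $\Hom_R(X,Q_0)$ so that the modified map still extends $\iota g$ but now factors through $T = \Ker\zeta$, producing a map $X \to T$ restricting to $g$; finally $\Hom_R(X,T) \ne 0$ would contradict nothing directly, so instead I would argue the factorization shows $g$ extends to $X \to T$, whose composite with $X \to \bar X$ vanishes on $\Rej_T(X)$... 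This needs care; the cleanest route is likely to show directly that $\Hom_R(\Rej_T(X), T) = 0$ by combining surjectivity of $\Hom_R(X,\zeta)$ with injectivity of $Q_0$, exactly the mechanism behind Lemma \ref{closure_prop}(3), and then close the argument with (ii). Once $\Rej_T(X) = 0$ is in hand, the proof concludes immediately, and the characterization of the trivial intersection $\Cogen(T) \cap \Ker\Hom_R(-,T) = 0$ recorded earlier confirms consistency.
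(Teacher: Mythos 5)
Your forward direction and the inclusion $\Cogen(T)\subseteq\CB_\zeta$ are correct and essentially match the paper. The genuine gap is in the second half of the converse, the inclusion $\CB_\zeta\subseteq\Cogen(T)$: you correctly isolate the crux, namely showing $\Hom_R(\Rej_T(X),T)=0$, but you never actually prove it — you only sketch a diagram chase and flag that it ``needs care.'' Moreover, the sketch as written contains a misdirection: after extending $\iota g$ to $h:X\to Q_0$ via injectivity of $Q_0$, you propose to adjust $h$ using surjectivity of $\Hom_R(X,\zeta)$. That is the wrong hypothesis; $\zeta h$ is already in the image of $\Hom_R(X,\zeta)$, so surjectivity of that map gives nothing new. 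What you actually need is surjectivity of $\Hom_R(\bar X,\zeta)$, where $\bar X=X/\Rej_T(X)$: since $\zeta h$ vanishes on $\Rej_T(X)$ it factors through $p:X\to\bar X$, and $\bar X\in\CB_\zeta$ lets you lift the resulting map $\bar X\to Q_1$ to $\tilde h:\bar X\to Q_0$; then $h'=\tilde h p$ kills $\Rej_T(X)$ by construction and $h-h':X\to T$ extends $g$, forcing $\iota g=(h-h')|_{\Rej_T(X)}=0$. Your appeal to Lemma~\ref{closure_prop}(3) is also off target: that is a closure-under-quotients statement, not what drives this step.

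The paper closes the same gap more economically via Lemma~\ref{closure_prop}(2). Since $\bar X\in\CB_\zeta\subseteq{}^{\perp}T$, applying $\Hom_R(-,T)$ to $0\to\Rej_T(X)\overset{i}\to X\overset{p}\to\bar X\to 0$ gives a short exact sequence $0\to\Hom_R(\bar X,T)\xrightarrow{\Hom(p,T)}\Hom_R(X,T)\xrightarrow{\Hom(i,T)}\Hom_R(\Rej_T(X),T)\to 0$. Now $\Hom(p,T)$ is always an isomorphism — every $f:X\to T$ vanishes on $\Rej_T(X)$ by definition of the reject, hence factors uniquely through $\bar X$ — so $\Hom(i,T)$ has image zero, while exactness makes it surjective; hence $\Hom_R(\Rej_T(X),T)=0$. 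This is the observation your proposal is missing; once in hand, your invocation of (ii) to get $\Rej_T(X)=0$ finishes the proof exactly as you describe.
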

\begin{proof} Suppose that $T$ is cosilting with respect to $\zeta$. Then we have the equality $\Cogen(T) = \CB_{\zeta}$. It follows that $T^{I} \in \Ker\CoDef_{\zeta}$, for all sets $I$. Now let $X \in \Ker\Hom_{R}(-,T) \cap \Ker\CoDef_{\zeta}$, i.e. $\Hom_{R}(X,T) = 0$ and $X \in \CB_{\zeta}$. Since $X \in \Cogen(T)$, there is a monomorphism $0 \to X \overset{f}\longrightarrow T^{I}$. If we assume that $X \ne 0$, then $f \ne 0$, hence $\Hom_{R}(X,T^{I}) \ne 0$, so that $\Hom_{R}(X,T)^{I} \ne 0$ and then $\Hom_{R}(X,T) \ne 0$, which is a contradiction. It follows that $X = 0$.

Conversely, suppose that the statements (i) and (ii) hold. Let $X \in \Cogen(T)$. Then there is a monomorphism $0 \to X \overset{f}\longrightarrow T^{I}$. By (i), we have that $T^{I}\in\CB_{\zeta}$ and, by Lemma \ref{closure_prop}(1), it follows that $X \in \CB_{\zeta}$.

In order to show the reverse inclusion, we consider $X\in\CB_{\zeta}$. Then we have the exact sequence $0 \to \Rej_{T}(X) \overset{i}\longrightarrow X\overset{p}\longrightarrow X/\Rej_{T}(X) \to 0$. Since, in general, $X/\Rej_{T}(X) \in \Cogen(T)$, it follows from above that $X/\Rej_{T}(X) \in \CB_{\zeta}$. By Lemma \ref{closure_prop}(2), we obtain $X/\Rej_{T}(X) \in {^{\perp}T}$, i.e. $\Ext^{1}_{R}(X/\Rej_{T}(X),T) = 0$, hence we have the following short exact sequence $0 \to \Hom_{R}(X/\Rej_{T}(X),T) \overset{\Hom_{R}(p,T)}\longrightarrow \Hom_{R}(X,T) \overset{\Hom_{R}(i,T)}\longrightarrow \Hom_{R}(\Rej_{T}(X),T) \to 0$. Since $\Hom_{R}(p,T)$ is, in general, an isomorphism, we obtain that $\Hom_{R}(\Rej_{T}(X),T)=0$, hence $\Rej_{T}(X) \in \Ker\Hom_{R}(-,T)$. On the other hand, applying Lemma \ref{closure_prop}(1), we obtain that $\Rej_{T}(X) \in \CB_{\zeta} = \Ker\CoDef_{\zeta}$. From (ii), we have that $\Rej_{T}(X) = 0$, hence $X \in \Cogen(T)$.
\end{proof}

Let $\CM$ be a class in $\Modr R$ and let $X$ be a right $R$-module. We say that an $R$-homomorphism $f : X \to M$ is an $\CM$-preenvelope of $X$ if $M \in \CM$ and the abelian group homomorphism $\Hom_{R}(f,M^{\prime}) : \Hom_{R}(M,M^{\prime}) \to \Hom_{R}(X,M^{\prime})$ is an epimorphism, for all $M^{\prime} \in \CM$.

\begin{proposition}\label{Angeleri_Corollary_1.5}\cite[Corollary 1.5]{Angeleri_2000_1} Let $X$ and $T$ be two right $R$-modules and suppose that $S = \End_{R}(T)$. Then $X_{R}$ has an $\add(T_{R})$-preenvelope if and only if the left $S$-module $_{S}\Hom_{R}(X,T)$ is finitely generated.
\end{proposition}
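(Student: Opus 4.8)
The plan is to prove both implications by relating $\add(T_R)$-preenvelopes of $X$ directly to generating sets of the left $S$-module $\Hom_R(X,T)$. The one structural fact to keep in mind throughout is that for \emph{any} $R$-homomorphism $h$ the induced map $\Hom_R(h,T)$ is automatically a morphism of left $S$-modules, since $(s\cdot g)\circ h = s\circ(g\circ h)$; also ${}_S\Hom_R(T^n,T)\cong {}_SS^n$, so finite direct sums of copies of $T$ correspond on the $\Hom(-,T)$ side to free left $S$-modules of finite rank.

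For the forward direction, I would start from an $\add(T_R)$-preenvelope $f\colon X\to M$, with $M$ a direct summand of $T^n$ for some $n$. Specializing the preenvelope property to the test module $M'=T$ shows that $\Hom_R(f,T)\colon\Hom_R(M,T)\to\Hom_R(X,T)$ is surjective, and by the remark above it is a homomorphism of left $S$-modules. Since $M$ is a direct summand of $T^n$, the left $S$-module $\Hom_R(M,T)$ is a direct summand of $\Hom_R(T^n,T)\cong {}_SS^n$, hence finitely generated; therefore its epimorphic image ${}_S\Hom_R(X,T)$ is finitely generated as well.

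For the converse, given generators $g_1,\dots,g_n$ of ${}_S\Hom_R(X,T)$, I would set $f=(g_1,\dots,g_n)\colon X\to T^n$, whose target lies in $\add(T_R)$, and verify that $f$ is a preenvelope. The key reduction is that it suffices to test surjectivity of $\Hom_R(f,-)$ on $M'=T$ alone: products of epimorphisms are epimorphisms, so $\Hom_R(f,T^m)$ is then surjective for every $m$, and $\Hom_R(f,M')$ is a direct summand of $\Hom_R(f,T^m)$ whenever $M'$ is a direct summand of $T^m$, hence also surjective. Finally, identifying $\Hom_R(T^n,T)$ with ${}_SS^n$, one checks that $\Hom_R(f,T)$ sends $(s_1,\dots,s_n)$ to the map $x\mapsto\sum_i s_i(g_i(x))=\big(\sum_i s_i\cdot g_i\big)(x)$; since the $g_i$ generate ${}_S\Hom_R(X,T)$, the image of $\Hom_R(f,T)$ is all of $\Hom_R(X,T)$.

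The argument is essentially bookkeeping once the left $S$-module structures are tracked correctly, and it even yields more: in the forward direction $\Hom_R(X,T)$ is a finitely generated \emph{projective} left $S$-module. The one step that requires a moment's thought — and which I expect to be the main, though mild, obstacle — is the reduction in the converse from an arbitrary $M'\in\add(T_R)$ to the single test object $T$, resting on the fact that $\Hom_R(f,-)$ commutes with finite direct sums and respects direct summands.
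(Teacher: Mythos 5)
The paper does not prove this statement itself: it is cited as \cite[Corollary 1.5]{Angeleri_2000_1}, so there is no in-text proof to compare against. Your argument is correct and is the standard proof of this fact. Both directions are handled properly: in the forward direction you correctly observe that $\Hom_R(f,T)$ is a surjection of left $S$-modules from $\Hom_R(M,T)$, which is a direct summand of ${}_S\Hom_R(T^n,T)\cong {}_SS^n$ and hence finitely generated; in the converse you package a finite generating set $g_1,\dots,g_n$ of ${}_S\Hom_R(X,T)$ into $f=(g_1,\dots,g_n)\colon X\to T^n$, reduce the preenvelope test to the single object $T$ via additivity of $\Hom_R(f,-)$ and passage to direct summands, and then identify $\Hom_R(f,T)$ with the evaluation $(s_1,\dots,s_n)\mapsto\sum_i s_i\cdot g_i$, which is surjective precisely because the $g_i$ generate. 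The only small stylistic point is the phrase ``$\Hom_R(f,M')$ is a direct summand of $\Hom_R(f,T^m)$''; morphisms are not usually called direct summands of one another, but the intended meaning (that $\Hom_R(f,T^m)$ decomposes as $\Hom_R(f,M')\oplus\Hom_R(f,N)$ compatibly with $T^m\cong M'\oplus N$, so surjectivity of the sum forces surjectivity of each component) is clear and correct. The added remark that ${}_S\Hom_R(X,T)$ is in fact finitely generated projective in the forward direction is also correct, though not needed for the stated equivalence.
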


Next, we recall the definition of ($\pi$-)coherent module (see, for instance, \cite{Angeleri_2000_1}) and then we state a very useful result related to this notion.

\begin{definition}Let $X$ be an $R$-module. Then $X$ in called:
\begin{itemize}
   \item[(a)] {\sl coherent} if:
      \begin{itemize}
         \item[(1)] $X$ is finitely presented;
         \item[(2)] every finitely generated submodule of $X$ is finitely presented.
      \end{itemize}
   \item[(b)] {\sl $\pi$-coherent} if:
      \begin{itemize}
         \item[(1)] $X$ is finitely presented;
         \item[(2)] every finitely generated $R$-module, which is cogenerated by $X$, is finitely presented.
      \end{itemize}
\end{itemize}
We say that the ring $R$ is a {\sl right} (respectively, {\sl left}) {\sl ($\pi$-)coherent ring} if the $R$-module $R_{R}$ (respectively, $_{R}R$) is a ($\pi$-)coherent $R$-module.
\end{definition}

\begin{theorem}\label{Angeleri_Theorem_3.7}\cite[Theorem 3.17]{Angeleri_2000_1} Let $T$ be a finitely generated right $R$-module with $S = \End_{R}(T)$. If every finitely generated module has an $\add(T_{R})$-preenvelope, then the left $S$-module $_{S}T$ is $\pi$-coherent.
\end{theorem}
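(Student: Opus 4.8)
The plan is to verify the two defining properties of $\pi$-coherence for the left $S$-module ${}_{S}T$: that (1) ${}_{S}T$ is finitely presented, and (2) every finitely generated, $T$-cogenerated left $S$-module is finitely presented. I would first observe that (1) reduces to the instance $L={}_{S}T$ of (2): since $R_{R}$ is cyclic, hence finitely generated, the hypothesis furnishes an $\add(T_{R})$-preenvelope of $R_{R}$, and Proposition \ref{Angeleri_Corollary_1.5} then shows that ${}_{S}\Hom_{R}(R,T)\cong{}_{S}T$ is finitely generated; as ${}_{S}T$ is trivially cogenerated by ${}_{S}T$, proving (2) in the needed generality will also yield (1).

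So fix a finitely generated left $S$-module $L$ that is $T$-cogenerated, choose an epimorphism $\pi\colon S^{n}\twoheadrightarrow L$, and set $K=\Ker(\pi)$; the goal is that $K$ is finitely generated over $S$. The strategy is to exhibit $K$ as $\Hom_{R}(C,T)$ for a suitable finitely generated right $R$-module $C$. Applying $\Hom_{S}(-,T)$ to $\pi$ gives a monomorphism $\Hom_{S}(\pi,T)\colon\Hom_{S}(L,T)\hookrightarrow\Hom_{S}(S^{n},T)\cong T^{n}$ of right $R$-modules; let $C$ be its cokernel. Applying $\Hom_{R}(-,T)$ to the exact sequence $0\to\Hom_{S}(L,T)\to T^{n}\to C\to 0$ and using the canonical identification $\Hom_{R}(T^{n},T)\cong S^{n}$ shows that the kernel of the double dual map $\Hom_{R}(\Hom_{S}(\pi,T),T)\colon S^{n}\to\Hom_{R}(\Hom_{S}(L,T),T)$ is exactly $\Hom_{R}(C,T)$, viewed inside $S^{n}$. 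Now the naturality square of the evaluation transformation $\delta$ relates this double dual map to $\pi$: since $\delta_{S^{n}}$ is an isomorphism (on a finitely generated free module the evaluation is the identity under the identifications $\Hom_{S}(S,T)\cong T$ and $\Hom_{R}(T,T)=S$) while $\delta_{L}$ is a monomorphism (because $L$ is $T$-cogenerated, i.e.\ $\Ker(\delta_{L})=\Rej_{T}(L)=0$), we obtain $K=\Ker(\pi)=\Ker(\delta_{L}\circ\pi)=\Ker\bigl(\Hom_{R}(\Hom_{S}(\pi,T),T)\bigr)=\Hom_{R}(C,T)$.

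It then suffices to feed $C$ back into the hypothesis: because $T_{R}$ is finitely generated, so is $T^{n}_{R}$, and hence so is its quotient $C$; by hypothesis $C$ admits an $\add(T_{R})$-preenvelope, so Proposition \ref{Angeleri_Corollary_1.5} gives that ${}_{S}\Hom_{R}(C,T)=K$ is finitely generated. Therefore $L=S^{n}/K$ is finitely presented, establishing (2) and, via the reduction above, (1). I expect the only real care to be needed in the bookkeeping around the identification $K\cong\Hom_{R}(C,T)$ — making the canonical isomorphisms $\Hom_{S}(S^{n},T)\cong T^{n}$ and $\Hom_{R}(T^{n},T)\cong S^{n}$ compatible with the naturality square for $\delta$, and confirming that $\delta_{S^{n}}$ really is invertible — rather than in any deeper phenomenon; once that diagram is in hand, the statement is just two applications of Proposition \ref{Angeleri_Corollary_1.5} glued together by the finiteness of $T_{R}$.
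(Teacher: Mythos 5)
The paper does not actually prove this statement; it is imported verbatim as \cite[Theorem 3.17]{Angeleri_2000_1}, so there is no internal argument to compare against. Your reconstruction is nevertheless correct and complete. The reduction of clause (1) of $\pi$-coherence to clause (2) is legitimate: since $R_{R}$ is finitely generated, the hypothesis plus Proposition \ref{Angeleri_Corollary_1.5} gives that ${}_{S}T\cong{}_{S}\Hom_{R}(R,T)$ is finitely generated, and ${}_{S}T$ certainly cogenerates itself. The core identification is also sound. Writing $\Delta_{S}=\Hom_{S}(-,T)$ and $\Delta_{R}=\Hom_{R}(-,T)$ and using the canonical isomorphisms $\Delta_{S}(S^{n})\cong T^{n}$ and $\Delta_{R}(T^{n})\cong S^{n}$, the evaluation map $\delta_{S^{n}}$ becomes the identity; naturality gives $\Delta_{R}\Delta_{S}(\pi)\circ\delta_{S^{n}}=\delta_{L}\circ\pi$; and injectivity of $\delta_{L}$ (which is exactly the hypothesis that $L$ is $T$-cogenerated, since $\Ker\delta_{L}=\Rej_{T}(L)$) then yields $\Ker(\pi)=\Ker(\delta_{L}\circ\pi)\cong\Ker\bigl(\Delta_{R}\Delta_{S}(\pi)\bigr)\cong\Hom_{R}(C,T)$, where $C=\Coker\bigl(\Hom_{S}(\pi,T)\bigr)$. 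Since $C$ is a quotient of $T^{n}_{R}$ and $T_{R}$ is finitely generated, $C$ is finitely generated, so the hypothesis and Proposition \ref{Angeleri_Corollary_1.5} make ${}_{S}\Hom_{R}(C,T)=K$ finitely generated, whence $L$ is finitely presented. This is the natural dual-module argument and, absent access to the original source, I would expect it to coincide with Angeleri-H\"ugel's proof.
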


\section{Finitely cosilting modules} \label{section_finitely_cosilting_modules}

For a right $R$-module $T_{R}$, we consider the following classes $\CY_{R} = \Cogen(T_{R}) \cap \modr R$ and $\CX_{R} = \Ker\Delta_{R} \cap \modr R$. Moreover, we denote $\Ext_{R}^{1}(-,T)$ by $\Gamma_{R}(-)$ and, if the ring $R$ is understood, we simply write $\Gamma(-)$.

\begin{lemma}\label{Angeleri_Lemma_2.5} Let $T_{R}$ be an $R$-module such that $T=\Ker(\zeta)$, where $\zeta : Q_{0} \to Q_{1}$ is an $R$-homomorphism between injective $R$-modules. Suppose that all $R$-modules in $\CY_{R}$ are finitely presented and belong to the class $\CB_{\zeta}$. Let $X$ be a finitely generated $R$-module and let $X^{\prime} = \Rej_{T}(X)$. Then the following statements hold.
\begin{itemize}
   \item[(1)] $X^{\prime}\in\CX_{R}$;
   \item[(2)] $X/X^{\prime}\in\CY_{R}$;
   \item[(3)] $\Delta(X)\cong\Delta(X/X^{\prime})$.
\end{itemize}
Moreover, if $T_{R}$ is faithful then $\Gamma(X)\cong\Gamma(X^{\prime})$.
\end{lemma}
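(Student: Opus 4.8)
The key device is the short exact sequence
\[
0 \to X' \xrightarrow{i} X \xrightarrow{p} X/X' \to 0,
\]
where $X' = \Rej_T(X)$. I would establish the three numbered claims essentially in the order stated, using the hypothesis on $\CY_R$ together with Lemma~\ref{closure_prop} and the rejection formalism from the Preliminaries.

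\emph{Step 1: $X/X' \in \CY_R$.} By definition of the reject, $X/\Rej_T(X)$ is always $T$-cogenerated, so $X/X' \in \Cogen(T_R)$; and since $X$ is finitely generated, so is its quotient $X/X'$. Hence $X/X' \in \Cogen(T_R)\cap\modr R = \CY_R$. This also shows, by the standing hypothesis, that $X/X'$ is finitely presented and lies in $\CB_\zeta$.

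\emph{Step 3 before Step 2: $\Delta(X)\cong\Delta(X/X')$.} Apply $\Delta = \Hom_R(-,T)$ to the sequence above. Since $X/X' \in \CY_R \subseteq \CB_\zeta \subseteq {}^{\perp}T$ by Lemma~\ref{closure_prop}(2), we have $\Ext^1_R(X/X',T)=0$, so the sequence
\[
0 \to \Delta(X/X') \xrightarrow{\Delta(p)} \Delta(X) \xrightarrow{\Delta(i)} \Delta(X') \to 0
\]
is exact. On the other hand $\Delta(i)$ is the zero map: any $g\colon X\to T$ kills $\Rej_T(X)$ by definition of the reject, so $g\circ i = 0$. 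Therefore $\Delta(i)=0$, which forces $\Delta(X')=0$ and makes $\Delta(p)$ an isomorphism. This simultaneously gives claim~(3) and the equality $\Delta(X')=0$, i.e. $X'\in\Ker\Delta_R$.

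\emph{Step 2: $X' \in \CX_R$.} It remains to see $X'$ is finitely generated. Here is where I expect the one genuine subtlety. We have the exact sequence $0\to X'\to X\to X/X'\to 0$ with $X$ finitely generated and $X/X'$ finitely \emph{presented} (by Step~1 and the hypothesis on $\CY_R$); a standard homological fact — if the middle term of a short exact sequence is finitely generated and the cokernel is finitely presented, then the kernel is finitely generated — then yields $X'\in\modr R$. Combined with $\Delta(X')=0$ from Step~3, we get $X' \in \Ker\Delta_R\cap\modr R = \CX_R$.

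\emph{The faithful case.} Now apply $\Gamma = \Ext^1_R(-,T)$ to the same sequence. The long exact sequence reads
\[
\Delta(X') \xrightarrow{\partial} \Gamma(X/X') \to \Gamma(X) \to \Gamma(X') \to \Gamma^2(X/X'),
\]
but $\Gamma(X/X')=0$ since $X/X'\in{}^{\perp}T$. This immediately gives an injection $\Gamma(X)\hookrightarrow\Gamma(X')$; for surjectivity one needs the next term. The cleanest route is to note that $X/X'\in\CY_R\subseteq\CB_\zeta$ together with $X'\in\CB_\zeta$ (obtained by applying Lemma~\ref{closure_prop}(1): $X'$ is a submodule of $X$, and $X\in\CB_\zeta$ because $X\in\CY_R$ once we observe $X$ itself is $T$-cogenerated — this is exactly where faithfulness of $T_R$ enters, guaranteeing $X \hookrightarrow X/\Rej_T(X)$... wait, that is false in general) so I would instead argue: faithfulness of $T_R$ means $\Rej_T(R_R)=0$, hence every $R$-module embeds in a product of copies of $T$ is \emph{not} automatic; rather, what faithfulness buys is that $\Ext^1_R(X',T)$ can be compared with $\Ext^1_R(X,T)$ via the fact that $X'\in\Ker\Delta_R$ forces the connecting map out of $\Delta(X/X')$ — no. Let me restate: the honest mechanism is that $\Gamma(X)\cong\Gamma(X')$ will follow from the long exact sequence once one knows $\Gamma(X/X')=0$ \emph{and} the connecting homomorphism $\Delta(X/X')\to\Gamma(X')$... vanishes; and this in turn follows because $\Delta(p)$ is surjective (Step~3), so in the six-term sequence the map $\Delta(X)\to\Delta(X')$ is zero and hence the connecting map $\Delta(X')\to\Gamma(X/X')$ is injective — but $\Gamma(X/X')=0$, giving $\Delta(X')=0$ (consistent), and then $\Gamma(X)\to\Gamma(X')$ is both injective and surjective. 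The role of faithfulness is only to ensure the setup of Lemma~\ref{closure_prop} applies with $\CB_\zeta$ behaving well, i.e. that $X\in\CB_\zeta$; I would verify that carefully, as it is the main obstacle in the final clause.

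Thus the main obstacle is twofold: first, the elementary but easy-to-misstate fact that the kernel in $0\to X'\to X\to X/X'\to 0$ is finitely generated when $X$ is f.g. and $X/X'$ is f.p.; and second, pinning down exactly how faithfulness of $T_R$ enters the $\Gamma$-comparison — I expect it is needed to guarantee $X\in\CB_\zeta$ (via $\Cogen(T_R)=\CB_\zeta$-type reasoning together with the hypothesis that all of $\CY_R$ lies in $\CB_\zeta$, applied after replacing $X$ by a suitable $T$-cogenerated overmodule), after which Lemma~\ref{closure_prop}(1) gives $X'\in\CB_\zeta$ and the long exact $\Ext$-sequence closes up.
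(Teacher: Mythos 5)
Your Steps 1--3 are correct and match the paper's argument, and you make one small improvement: you explain directly why $\Delta(p)$ is an isomorphism (every $g\colon X\to T$ kills $\Rej_T(X)$, so $\Delta(i)=0$), whereas the paper simply asserts it. The paper uses the same short exact sequence $0\to X'\to X\to X/X'\to 0$, the same deduction $\Gamma(X/X')=0$ from Lemma~\ref{closure_prop}(2), and the same appeal to the fact (cited as \cite[25.4]{Wisbauer_1991}) that the kernel is finitely generated when the middle term is finitely generated and the cokernel is finitely presented.

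The ``Moreover'' clause, however, contains a genuine gap, and your closing diagnosis of how faithfulness enters is wrong. In the long exact sequence
\[
0\to\Delta(X/X')\to\Delta(X)\to\Delta(X')\to\Gamma(X/X')\to\Gamma(X)\to\Gamma(X')\to\Ext^2_R(X/X',T)\to\cdots,
\]
the vanishing $\Gamma(X/X')=0$ gives injectivity of $\Gamma(X)\to\Gamma(X')$, but surjectivity requires control of the next term, which is $\Ext^2_R(X/X',T)$ (not $\Gamma^2(X/X')$ as you wrote). Nothing you establish about the maps $\Delta(X)\to\Delta(X')$ or the connecting map $\Delta(X')\to\Gamma(X/X')$ touches this: asserting that $\Gamma(X)\to\Gamma(X')$ is ``both injective and surjective'' at that point is unjustified. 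And the role of faithfulness is \emph{not} to put $X$ into $\CB_\zeta$ --- indeed $X$ will typically not lie in $\CB_\zeta$ (that is precisely why one passes to $X/\Rej_T(X)$). Faithfulness is used to show $\Ext^2_R(Y,T)=0$ for every finitely presented $Y$: take a presentation $0\to K\to R^n\to Y\to 0$ with $K$ finitely generated; faithfulness gives $R\in\Cogen(T_R)$, hence $K\in\Cogen(T_R)\cap\modr R=\CY_R$, hence $K\in\CB_\zeta\subseteq{}^\perp T$ by hypothesis and Lemma~\ref{closure_prop}(2), so $\Gamma(K)=0$; combined with $\Ext^2_R(R^n,T)=0$, the long exact sequence forces $\Ext^2_R(Y,T)=0$. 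Applying this with $Y=X/X'$ (finitely presented by Step 1) kills the obstruction term and yields $\Gamma(X)\cong\Gamma(X')$.
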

\begin{proof} Since $X/X^{\prime}\in\Cogen(T)$ and since $X \in \modr R$, hence $X/X^{\prime} \in \modr R$, we obtain that $X/X^{\prime}\in\CY_{R}$. By hypothesis, we have that $X/X^{\prime}$ is finitely presented and $X/X^{\prime}\in\CB_{\zeta}$. By Lemma \ref{closure_prop}(2), we have $\Gamma(X/X^{\prime})=0$. Applying the contravariant $\Delta$ functor to the canonical short exact sequence $$0 \to X^{\prime} \overset{i}\longrightarrow X \overset{p}\longrightarrow X/X^{\prime} \to 0$$ we obtain the following two exact sequences $$0 \to \Delta(X/X^{\prime}) \overset{\Delta(p)}\longrightarrow \Delta(X) \overset{\Delta(i)}\longrightarrow \Delta(X^{\prime}) \to 0$$ and $$ 0 \to \Gamma(X) \overset{\Gamma(i)}\longrightarrow \Gamma(X^{\prime}) \overset{\delta^{1}}\longrightarrow \Ext^{2}_{R}(X/X^{\prime},T) \longrightarrow \dots$$Since $\Delta(p)$ is an isomorphism, hence $\Delta(X)\cong\Delta(X/X^{\prime})$, we obtain that $\Delta(X^{\prime})=0$, i.e. $X^{\prime}\in\Ker\Delta$. By \cite[25.4]{Wisbauer_1991}, $X^{\prime}$ is finitely generated, i.e. $X^{\prime} \in \modr R$. Therefore $X^{\prime}\in\CX_{R}$.

Now, suppose that $T_{R}$ is faithful. We show that $\Ext^{2}_{R}(Y,T)=0$, for all finitely presented $R$-modules $Y$. Let $Y$ be a finitely presented $R$-module. Then there is an exact sequence $0 \to K \overset{i}\longrightarrow R^{n} \overset{g}\longrightarrow Y \to 0$ with $K \in \modr R$. Applying the contravariant $\Delta$ functor, we obtain the long exact sequence$$0 \to \Delta(Y) \overset{\Delta(g)}\longrightarrow \Delta(R^{n}) \overset{\Delta(i)}\longrightarrow \Delta(K) \overset{\delta^{0}}\longrightarrow$$ $$\overset{\delta^{0}}\longrightarrow \Gamma(Y) \overset{\Gamma(g)}\longrightarrow \Gamma(R^{n}) \overset{\Gamma(i)}\longrightarrow \Gamma(K) \overset{\delta^{1}}\longrightarrow$$ $$\overset{\delta^{1}}\longrightarrow \Ext^{2}_{R}(Y,T) \overset{\Ext^{2}_{R}(g,T)}\longrightarrow \Ext^{2}_{R}(R^{n},T) \overset{\Ext^{2}_{R}(i,T)}\longrightarrow \Ext^{2}_{R}(K,T) \overset{\delta^{2}}\longrightarrow \dots$$Since $T_{R}$ is faithful, we have that $R\in\Cogen(T_{R})$, hence $R^{n}\in\Cogen(T_{R})$ and thus $K\in\Cogen(T_{R})$. Then $K\in\CY_{R}$ and, by hypothesis, we have that $K\in\CB_{\zeta}$. From Lemma \ref{closure_prop}(2), we have $\Gamma(K)=0$ and since $\Ext^{2}_{R}(R^{n},T)=0$ we obtain that $\Ext^{2}_{R}(Y,T)=0$.

Therefore, since $X/X^{\prime}$ is finitely presented, we have that  $\Ext^{2}_{R}(X/X^{\prime},T) = 0$, hence $\Gamma(X)\cong\Gamma(X^{\prime})$.
\end{proof}

\begin{lemma}\label{Angeleri_Proposition_1.1.2} Let $T_{R}$ be an $R$-module and let $\zeta : Q_{0} \to Q_{1}$ be a homomorphism between injective $R$-modules. Assume that $T=\Ker(\zeta)$ and $\Ker\Hom_{R}(-,T) \cap \Ker\CoDef_{\zeta} = 0$. If $X$ is a right $R$-module such that $X\in\CB_{\zeta}$ and $X/\Rej_{T}(X)\in\CB_{\zeta}$, then $X\in\Cogen(T_{R})$.
\end{lemma}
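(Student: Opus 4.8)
The plan is to follow the reverse-inclusion half of the proof of Proposition~\ref{characterization_cosilting}, observing that that argument used nothing about $X$ beyond the two properties now assumed directly by hypothesis. Write $X' = \Rej_{T}(X)$ and consider the canonical short exact sequence
$$0 \to X' \overset{i}\longrightarrow X \overset{p}\longrightarrow X/X' \to 0.$$
Since $X/X' \in \CB_{\zeta}$ by hypothesis, Lemma~\ref{closure_prop}(2) gives $X/X' \in {}^{\perp}T$, i.e. $\Ext^{1}_{R}(X/X',T)=0$.

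Next I would apply the contravariant functor $\Hom_{R}(-,T)$ to the sequence above. Using $\Ext^{1}_{R}(X/X',T)=0$, this yields the short exact sequence
$$0 \to \Hom_{R}(X/X',T) \overset{\Hom_{R}(p,T)}\longrightarrow \Hom_{R}(X,T) \overset{\Hom_{R}(i,T)}\longrightarrow \Hom_{R}(X',T) \to 0.$$
The key point is that $\Hom_{R}(p,T)$ is an isomorphism: it is injective because $p$ is an epimorphism, and it is surjective because, by the very definition of the reject, every $f \in \Hom_{R}(X,T)$ vanishes on $X' = \Rej_{T}(X)$ and hence factors through $p$. Since $\Hom_{R}(p,T)$ is an isomorphism and the displayed sequence is exact, $\Hom_{R}(i,T)$ must be the zero map, forcing $\Hom_{R}(X',T)=0$; that is, $X' \in \Ker\Hom_{R}(-,T)$.

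It remains to place $X'$ in $\Ker\CoDef_{\zeta}$. Since $X \in \CB_{\zeta} = \Ker\CoDef_{\zeta}$ and $\CB_{\zeta}$ is closed under submodules by Lemma~\ref{closure_prop}(1), the submodule $X'$ of $X$ also lies in $\CB_{\zeta} = \Ker\CoDef_{\zeta}$. Combining the last two conclusions, $X' \in \Ker\Hom_{R}(-,T) \cap \Ker\CoDef_{\zeta}$, which by hypothesis is $0$; hence $\Rej_{T}(X) = X' = 0$. As recorded in the Preliminaries, $\Ker(\delta_{X}) = \Rej_{T}(X)$, so $\delta_{X}$ is a monomorphism, i.e. $X$ is $T$-cogenerated, which is precisely $X \in \Cogen(T_{R})$.

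There is no genuine obstacle here; the only slightly delicate point is justifying that $\Hom_{R}(p,T)$ is an isomorphism, which is the standard reject factorization already invoked (as ``in general'') in the proof of Proposition~\ref{characterization_cosilting}. One should merely take care to use each hypothesis in the right place: $X/\Rej_{T}(X)\in\CB_{\zeta}$ is what makes the $\Hom$-sequence short exact, while $X \in \CB_{\zeta}$ together with submodule-closure of $\CB_{\zeta}$ is what puts $\Rej_{T}(X)$ into $\Ker\CoDef_{\zeta}$.
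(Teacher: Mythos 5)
Your proof is correct and follows essentially the same route as the paper's: the same short exact sequence $0 \to \Rej_T(X) \to X \to X/\Rej_T(X) \to 0$, the same application of $\Hom_R(-,T)$ using $\Ext^1_R(X/\Rej_T(X),T)=0$ via Lemma~\ref{closure_prop}(2) to conclude $\Rej_T(X)\in\Ker\Delta$, and the same combination of submodule-closure with the hypothesis $\Ker\Hom_R(-,T)\cap\Ker\CoDef_\zeta=0$. The extra justification you give for $\Hom_R(p,T)$ being an isomorphism is a detail the paper leaves implicit, but it does not constitute a different argument.
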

\begin{proof} Let us denote $\Rej_{T}(X)$ by $X^{\prime}$. Since $X/X^{\prime} \in \CB_{\zeta}$, it follows, by Lemma \ref{closure_prop}(2), that $\Gamma(X/X^{\prime}) = 0$. Applying the contravariant $\Delta$ functor to the short exact sequence $0 \to X^{\prime} \overset{i}\longrightarrow X \overset{p}\longrightarrow X/X^{\prime} \to 0$, we obtain the short exact sequence $0 \to \Delta(X/X^{\prime}) \overset{\Delta(p)}\longrightarrow \Delta(X) \overset{\Delta(i)}\longrightarrow \Delta(X^{\prime}) \to 0$ and then, taking into account that $\Delta(p)$ is an isomorphism, we have that $\Delta(X^{\prime}) = 0$, i.e. $X^{\prime}\in\Ker\Delta$. By Lemma \ref{closure_prop}(1), we have that $X^{\prime} \in \Ker\CoDef_{\zeta}$. From hypothesis, we have that $X^{\prime}=0$, hence $X\in\Cogen(T_{R})$.
\end{proof}

\begin{proposition}\label{Angeleri_Proposition_1.1.1} Let $T_{R}$ be an $R$-module with $S=\End_{R}(T)$ and let $\zeta : Q_{0} \to Q_{1}$ be a homomorphism between injective $R$-modules. Assume that $T=\Ker(\zeta)$ and $T \in \Ker\CoDef_{\zeta}$. Let $X\in\Cogen(T_{R})$. Then $\Delta_{}(X)$ is finitely generated left $S$-module if and only if there is an exact sequence $0 \to X \overset{}\longrightarrow T^{n} \overset{}\longrightarrow Z \to 0$ with $Z\in\CB_{\zeta}$.
\end{proposition}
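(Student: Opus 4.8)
The plan is to prove both implications by applying the contravariant functor $\Delta=\Hom_{R}(-,T)$ to the relevant short exact sequence and reading off the induced long exact sequence in $\Ext_{R}^{\ast}(-,T)$, using repeatedly that $T\in\CB_{\zeta}\subseteq{^{\perp}T}$ by Lemma \ref{closure_prop}(2), so that $\Ext_{R}^{1}(T^{n},T)=0$, together with the identification $\Delta(T^{n})\cong{_{S}S^{n}}$ as a free left $S$-module of rank $n$. Recall also that, by the Remark, the hypothesis $T\in\Ker\CoDef_{\zeta}$ is exactly the statement $T\in\CB_{\zeta}$, so $T^{n}\in\CB_{\zeta}$ by closure under direct sums (Lemma \ref{closure_prop}(1)).

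For the implication ``$\Leftarrow$'', suppose we are given an exact sequence $0\to X\to T^{n}\to Z\to 0$ with $Z\in\CB_{\zeta}$. Then $Z\in{^{\perp}T}$, so applying $\Delta$ yields a short exact sequence $0\to\Delta(Z)\to{_{S}S^{n}}\to\Delta(X)\to 0$ of left $S$-modules; hence $\Delta(X)$ is a quotient of ${_{S}S^{n}}$ and is finitely generated.

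For ``$\Rightarrow$'', suppose $\Delta(X)=\Hom_{R}(X,T)$ is generated as a left $S$-module by $f_{1},\dots,f_{n}$, and form $f=(f_{1},\dots,f_{n}):X\to T^{n}$. First I would check that $f$ is a monomorphism: if $x\in\Ker(f)=\bigcap_{i}\Ker(f_{i})$, then for any $g\in\Hom_{R}(X,T)$, writing $g=\sum_{i}s_{i}f_{i}$ with $s_{i}\in S$, we get $g(x)=\sum_{i}s_{i}(f_{i}(x))=0$, so $x\in\Rej_{T}(X)$, which is $0$ because $X\in\Cogen(T_{R})$. Set $Z=\Coker(f)$, so $0\to X\to T^{n}\to Z\to 0$ is exact. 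Since $X$ embeds in $T^{n}\in\CB_{\zeta}$, closure under submodules (Lemma \ref{closure_prop}(1)) gives $X\in\CB_{\zeta}$ as well.

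The crux is to show $Z\in\CB_{\zeta}$; by Lemma \ref{closure_prop}(3) applied to $0\to X\to T^{n}\to Z\to 0$ it suffices to prove $Z\in{^{\perp}T}$, i.e. $\Ext_{R}^{1}(Z,T)=0$. Applying $\Delta$ to this sequence and using $\Ext_{R}^{1}(T^{n},T)=0$, we obtain an exact sequence $\Delta(T^{n})\xrightarrow{\Delta(f)}\Delta(X)\to\Ext_{R}^{1}(Z,T)\to 0$; unwinding the identification $\Delta(T^{n})\cong S^{n}$, the image of $\Delta(f)$ is precisely the left $S$-submodule of $\Delta(X)$ generated by $f_{1},\dots,f_{n}$, which by our choice of generators is all of $\Delta(X)$. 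Hence $\Delta(f)$ is surjective, $\Ext_{R}^{1}(Z,T)=0$, and Lemma \ref{closure_prop}(3) gives $Z\in\CB_{\zeta}$, completing the proof. I expect the only mildly delicate points to be the bookkeeping that identifies $\ima\Delta(f)$ with the $S$-span of the $f_{i}$, and remembering to establish $X\in\CB_{\zeta}$ beforehand so that Lemma \ref{closure_prop}(3) is applicable.
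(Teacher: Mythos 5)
Your proof is correct and takes essentially the same approach as the paper: apply $\Delta$ to the short exact sequence $0\to X\to T^{n}\to Z\to 0$ and analyse the induced long exact $\Ext$-sequence, with Lemma \ref{closure_prop} controlling the vanishing of the relevant $\Ext^{1}$ groups. The only genuine difference is in the forward implication, where the paper cites \cite[Proposition 4.2.2]{Colby_Fuller_2004} for the existence of a monomorphism $f\colon X\to T^{n}$ with $\Delta(f)$ surjective; you instead build $f$ directly from a finite set of $S$-generators of $\Delta(X)$, verifying injectivity via $\Ker(f)\subseteq\Rej_{T}(X)=0$ and surjectivity of $\Delta(f)$ by identifying its image with the $S$-span of those generators --- a correct, self-contained unpacking of the same step.
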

\begin{proof} Suppose that $\Delta_{}(X)$ is finitely generated left $S$-module. By \cite[Proposition 4.2.2]{Colby_Fuller_2004}, there is a monomorphism $0 \to X \overset{f}\longrightarrow T^{n}$ such that $\Delta_{}(f)$ is an epimorphism. Hence we have an exact sequence $0 \to X \overset{f}\longrightarrow T^{n} \overset{p}\longrightarrow Z \to 0$ and applying the contravariant $\Delta_{}$ functor, we obtain the long exact sequence $$0 \to \Delta(Z) \overset{\Delta(p)}\longrightarrow \Delta(T^{n}) \overset{\Delta(f)}\longrightarrow \Delta(X) \overset{\delta^{0}}\longrightarrow$$ $$\overset{\delta^{0}}\longrightarrow \Gamma(Z) \overset{\Gamma(p)}\longrightarrow \Gamma(T^{n}) \overset{\Gamma(f)}\longrightarrow \Gamma(X) \overset{\delta^{1}}\longrightarrow \dots$$
Since $T\in\CB_{\zeta}$, we have, by Lemma \ref{closure_prop}(1), that $T^{n} \in \CB_{\zeta}$ and $X \in \CB_{\zeta}$. Thus $\Gamma(T^{n}) = 0$ and taking into account that $\Delta_{}(f)$ is an epimorphism, we obtain that $\Gamma(Z) = 0$, i.e. $Z \in {^{\perp}}T$. By Lemma \ref{closure_prop}(3), we get that $Z\in\CB_{\zeta}$.

Conversely, suppose that there is an exact sequence $0 \to X \overset{f}\longrightarrow T^{n} \overset{g}\longrightarrow Z \to 0$ with $Z\in\CB_{\zeta}$. Hence we have the following long exact sequence induced by $\Delta_{}$ $$0 \to \Delta(Z) \overset{\Delta(g)}\longrightarrow \Delta(T^{n}) \overset{\Delta(f)}\longrightarrow \Delta(X) \overset{\delta^{0}}\longrightarrow$$ $$\overset{\delta^{0}}\longrightarrow \Gamma(Z) \overset{\Gamma(g)}\longrightarrow \Gamma(T^{n}) \overset{\Gamma(f)}\longrightarrow \Gamma(X) \overset{\delta^{1}}\longrightarrow \dots$$ Since $Z\in\CB_{\zeta}$, we have $\Gamma(Z)=0$, hence $S^{n} \cong \Delta(T^{n}) \overset{\Delta(f)}\longrightarrow \Delta(X) \to 0$ is an epimorphism, i.e. $\Delta(X)$ is finitely generated left $S$-module.
\end{proof}

\begin{proposition}\label{Angeleri_Proposition_1.2.2} Let $T_{R}$ be a finitely generated $R$-module with $S=\End_{R}(T)$. Suppose that $\Delta_{R} : \Modr R \to S \Mod$ carries finitely generated modules to finitely generated modules. Then the following assertions hold.
\begin{itemize}
   \item[(1)] If $_{S}Y \in \Cogen(_{S}T)$ is finitely generated, then $_{S}Y$ is finitely presented.
   \item[(2)] $\Delta_{R}$ carries finitely generated modules to finitely presented modules.
   \item[(3)] $\Gamma_{R}$ carries finitely presented modules to finitely presented modules.
\end{itemize}
\end{proposition}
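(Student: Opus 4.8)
The plan is to establish the three assertions in the order (2), (1), (3), since (1) is really a consequence of (2) once we know enough about the left-hand side, and (3) follows from (2) by a standard long-exact-sequence argument. The hypothesis that $\Delta_R$ preserves finite generation, together with Proposition \ref{Angeleri_Corollary_1.5}, immediately gives that every finitely generated $R$-module $X$ has an $\add(T_R)$-preenvelope; applying Theorem \ref{Angeleri_Theorem_3.7} we conclude that $_{S}T$ is $\pi$-coherent. This is the key input, and unpacking its consequences is what drives the whole proof.

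For (2): let $X_R$ be finitely generated, so $_S\Delta(X)$ is finitely generated by hypothesis, say generated by $n$ elements, i.e. there is an epimorphism $_SS^n \to {}_S\Delta(X) \to 0$, equivalently $\Delta(T^n) \to \Delta(X) \to 0$. Dualizing the $\add(T_R)$-preenvelope of $X$ one gets a presentation of $\Delta(X)$: more precisely, take $f : X \to T^m$ to be an $\add(T_R)$-preenvelope; then $\Delta(f) : \Delta(T^m) \to \Delta(X)$ is onto, so $\Delta(X)$ is a quotient of $_SS^m$, and its kernel $N$ sits in $0 \to N \to S^m \to \Delta(X) \to 0$. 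To see $N$ is finitely generated we use that $_SS^m$ is $\pi$-coherent (a finite direct sum of $\pi$-coherent modules is $\pi$-coherent) once we check $N$ is cogenerated by $_ST$ and finitely generated — but in fact the cleanest route is: $\Delta(X)$ is a submodule of $\Delta(K)$ for a suitable $K$ (dualize a finite presentation $R^k \to R^\ell \to X \to 0$ of $X$ to embed $\Delta(X) \hookrightarrow \Delta(R^\ell) = {}_SS^\ell$ after noting $\Delta$ is left exact), so $_S\Delta(X)$ is a finitely generated submodule of a free module $_SS^\ell$; since $_SS^\ell$ is $\pi$-coherent and $\Delta(X)\in\Cogen(_ST)$ is finitely generated, $\pi$-coherence yields that $_S\Delta(X)$ is finitely presented.

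For (1): if $_SY \in \Cogen(_ST)$ is finitely generated, then $_SY$ is a finitely generated module cogenerated by $_ST$, and $_S S$ is $\pi$-coherent (being the endomorphism ring, with $_ST$ $\pi$-coherent — here one should note $_SS = {}_S\Delta(T)$ and $T_R$ finitely generated, so $_SS$ is finitely generated as a summand/quotient appropriately; more directly, $\pi$-coherence of $_ST$ together with $T$ finitely generated over $R$ forces $_SS$ to be $\pi$-coherent by \cite{Angeleri_2000_1}). By the definition of $\pi$-coherent ring, every finitely generated left $S$-module cogenerated by a $\pi$-coherent module — in particular any finitely generated submodule of a product of copies of $_ST$ — is finitely presented; hence $_SY$ is finitely presented. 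For (3): given a finitely presented $X_R$, choose $0 \to K \to R^n \to X \to 0$ with $K$ finitely generated; apply $\Delta_R$ to get $0 \to \Delta(X) \to S^n \to \Delta(K) \to \Gamma(X) \to 0$ (using $\Gamma(R^n)=0$). By part (2), $\Delta(K)$ is finitely presented and $\Delta(X)$ is finitely generated, so the image of $S^n \to \Delta(K)$ is finitely generated, and $\Gamma(X)$, being the cokernel of a map from $_SS^n$ into a finitely presented module with finitely generated image, is a finitely presented $S$-module (a quotient of a finitely presented module by a finitely generated submodule).

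The main obstacle is the bookkeeping in step (2): one must be careful that the kernel of $S^\ell \to \Delta(X)$ — or whichever presenting sequence one uses — really is a finitely generated submodule of a $\pi$-coherent module cogenerated by $_ST$, so that $\pi$-coherence of $_ST$ (equivalently, of $_SS$) applies verbatim. Everything else is a routine diagram chase with the long exact $\Ext$-sequences and the left-exactness of $\Delta$.
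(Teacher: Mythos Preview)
Your key input is right and matches the paper: the hypothesis plus Proposition~\ref{Angeleri_Corollary_1.5} gives $\add(T_R)$-preenvelopes for all finitely generated modules, so Theorem~\ref{Angeleri_Theorem_3.7} yields that $_ST$ is $\pi$-coherent. From there, however, your ordering $(2),(1),(3)$ makes life harder than it needs to be, and the detours you take in part~(2) contain actual errors.

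The paper proves $(1),(2),(3)$ in order, and this is the natural flow. Assertion~(1) is \emph{literally} the second clause in the definition of $\pi$-coherence for $_ST$: a finitely generated $_SY\in\Cogen(_ST)$ is finitely presented. No argument about $_SS$ being $\pi$-coherent is needed (and your aside there is muddled: $_SS={}_S\Delta(T)$ does not by itself make $_SS$ $\pi$-coherent in the way you suggest). Assertion~(2) then follows in one line from~(1): for any $X$ the module $\Delta_R(X)$ is automatically in $\Cogen(_ST)$ (every dual is $\delta$-torsionless), and it is finitely generated by hypothesis, so~(1) applies. Your argument for~(2) instead tries to embed $\Delta(X)$ into a free module and appeal to $\pi$-coherence of $_SS^\ell$, but along the way you (i) assume a finite presentation $R^k\to R^\ell\to X\to 0$ of $X$, which is not given in~(2) where $X$ is only finitely generated, and (ii) write $\Delta(R^\ell)={}_SS^\ell$, which is false: $\Delta(R)\cong{}_ST$, so $\Delta(R^\ell)\cong{}_ST^\ell$. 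Your concluding sentence of~(2) --- ``$\Delta(X)\in\Cogen(_ST)$ is finitely generated, $\pi$-coherence yields that $_S\Delta(X)$ is finitely presented'' --- is correct and is all that is required; everything before it in that paragraph should be discarded.

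Your argument for~(3) is essentially the paper's: from $0\to K\to R^m\to X\to 0$ with $K$ finitely generated one gets, using $\Gamma_R(R^m)=0$, a short exact sequence $0\to\Ker(\delta^0)\to\Delta_R(K)\to\Gamma_R(X)\to 0$; by~(2) both $\Delta_R(R^m)$ and $\Delta_R(K)$ are finitely presented, so $\Ker(\delta^0)$ (a quotient of $\Delta_R(R^m)$) is finitely generated, whence $\Gamma_R(X)$ is finitely presented.
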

\begin{proof} By Proposition \ref{Angeleri_Corollary_1.5} and Theorem \ref{Angeleri_Theorem_3.7}, we have that $_{S}T$ is $\pi$-coherent. Thus (1) follows.

If $X$ is a finitely generated $R$-module, then $\Delta_{R}(X)$ is also finitely generated $S$-module (by hypothesis). But $\Delta_{R}(X) \in \Cogen(_{S}T)$ and then (2) follows by applying (1).

Let $X$ be a finitely presented $R$-module. Then there is an exact sequence $0 \to K \overset{i}\longrightarrow R^{m} \overset{g}\longrightarrow X \to 0$ with $K$ being finitely generated $R$-module. Applying the contravariant $\Delta_{R}$ functor to this sequence, we obtain the induced long exact sequence $$0 \to \Delta_{R}(X) \overset{\Delta_{R}(g)}\longrightarrow \Delta_{R}(R^{m}) \overset{\Delta_{R}(i)}\longrightarrow \Delta_{R}(K) \overset{\delta^{0}}\longrightarrow$$ $$\overset{\delta^{0}}\longrightarrow \Gamma_{R}(X) \overset{\Gamma_{R}(g)}\longrightarrow \Gamma_{R}(R^{m}) \overset{\Gamma_{R}(i)}\longrightarrow \Gamma_{R}(K) \overset{\delta^{1}}\longrightarrow \dots$$

Since $\Gamma_{R}(R^{m}) = 0$, we have the following short exact sequence $0 \to \Ker(\delta^{0}) \overset{j}\longrightarrow \Delta_{R}(K) \overset{\delta^{0}}\longrightarrow \Gamma_{R}(X) \to 0$. From (2), the left $S$-modules $\Delta_{R}(R^{m})$ and $\Delta_{R}(K)$ are finitely presented. It follows that $\Ker(\delta^{0})$ is finitely generated. Then (3) follows by \cite[25.1]{Wisbauer_1991}.
\end{proof}

Now, we define the notion of finitely cosilting module.

\begin{definition} Let $T$ be a right $R$-module with $S=\End_{R}(T)$ and let $T=\Ker(\zeta)$, where $\zeta : Q_{0} \to Q_{1}$ is an $R$-homomorphism between injective $R$-modules. We say that $T$ is {\sl finitely cosilting with respect to $\zeta$} if the following conditions are satisfied:
\begin{itemize}
   \item[(1)] $T\in\Ker\CoDef_{\zeta}$;
   \item[(2)] $\Ker\Hom_{R}(-,T)\cap\Ker\CoDef_{\zeta}=0$;
   \item[(3)] $T$ is finitely generated;
   \item[(4)] $\Hom_{R}(-,T):\Modr R \to S \Mod$ carries finitely generated modules to finitely generated modules.
\end{itemize}
\end{definition}

\begin{proposition}\label{Angeleri_Proposition_1.1.3} If $T_{R}$ is a finitely cosilting $R$-module with respect to the injective copresentation $\zeta : Q_{0} \to Q_{1}$, then $\CY_{R} = \CB_{\zeta} \cap \modr R$.
\end{proposition}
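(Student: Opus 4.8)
The plan is to prove the equality $\CY_R = \CB_\zeta \cap \modr R$ by establishing the two inclusions separately, exploiting the fact that a finitely cosilting module is in particular cosilting (so Proposition \ref{characterization_cosilting} applies, since conditions (1) and (2) of finitely cosilting are exactly (i) and (ii) there) together with the extra finiteness hypotheses (3) and (4).

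\medskip

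\textbf{First inclusion: $\CY_R \subseteq \CB_\zeta \cap \modr R$.} Let $X \in \CY_R = \Cogen(T_R) \cap \modr R$. Since $T$ is cosilting with respect to $\zeta$, we have $\Cogen(T) = \CB_\zeta$, so $X \in \CB_\zeta$; and $X \in \modr R$ by definition of $\CY_R$. This direction is immediate and uses nothing beyond the cosilting property. (In fact it shows the stronger inclusion $\CY_R \subseteq \CB_\zeta \cap \modr R$ without needing (3) or (4).)

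\medskip

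\textbf{Second inclusion: $\CB_\zeta \cap \modr R \subseteq \CY_R$.} Let $X \in \CB_\zeta$ be finitely generated. I want to show $X \in \Cogen(T)$, i.e.\ $X \in \CB_\zeta$ \emph{and} $X/\Rej_T(X) \in \CB_\zeta$, and then invoke Lemma \ref{Angeleri_Proposition_1.1.2} (whose hypothesis $\Ker\Hom_R(-,T) \cap \Ker\CoDef_\zeta = 0$ is condition (2) of finitely cosilting). So the task reduces to showing $X/\Rej_T(X) \in \CB_\zeta$. Now $X/\Rej_T(X)$ is finitely generated (being a quotient of $X$) and lies in $\Cogen(T)$ automatically, hence $X/\Rej_T(X) \in \Cogen(T) \cap \modr R = \Cogen(T) = \CB_\zeta$ using again the cosilting equality. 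Therefore by Lemma \ref{Angeleri_Proposition_1.1.2}, $X \in \Cogen(T)$; combined with $X \in \modr R$ this gives $X \in \CY_R$.

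\medskip

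The main thing to check carefully is that the two finiteness conditions (3) and (4) are genuinely needed somewhere — and indeed they are not needed for the set-theoretic equality $\CY_R = \CB_\zeta \cap \modr R$ as argued above, since the cosilting equality $\Cogen(T) = \CB_\zeta$ already does all the work. So the honest statement of the proof is short: both inclusions follow directly from $\Cogen(T) = \CB_\zeta$ (for $\subseteq$) and from Lemma \ref{Angeleri_Proposition_1.1.2} applied to the quotient $X/\Rej_T(X) \in \Cogen(T) \subseteq \CB_\zeta$ (for $\supseteq$), with condition (2) supplying the hypothesis of that lemma. The only subtlety — and the step I would present most carefully — is the verification that $X/\Rej_T(X) \in \CB_\zeta$, which hinges on the general fact (recalled in the preliminaries) that $X/\Rej_T(X)$ is always $T$-cogenerated together with the cosilting identification of $\Cogen(T)$ with $\CB_\zeta$; after that, Lemma \ref{Angeleri_Proposition_1.1.2} closes the argument. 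I expect conditions (3) and (4) to be used instead in the subsequent results (e.g.\ to invoke Propositions \ref{Angeleri_Proposition_1.1.1} and \ref{Angeleri_Proposition_1.2.2}), not in this proposition itself.
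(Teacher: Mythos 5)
Your argument hinges on the claim that conditions (1) and (2) in the definition of finitely cosilting ``are exactly (i) and (ii)'' of Proposition~\ref{characterization_cosilting}, so that a finitely cosilting module is automatically cosilting and hence $\Cogen(T)=\CB_{\zeta}$. This is not correct: condition~(1) of the definition asks only that $T\in\Ker\CoDef_{\zeta}$, whereas condition~(i) of Proposition~\ref{characterization_cosilting} asks that $T^{I}\in\Ker\CoDef_{\zeta}$ for \emph{every} set $I$, i.e.\ for arbitrary direct \emph{products}. Lemma~\ref{closure_prop}(1) gives closure of $\CB_{\zeta}$ under direct sums but not under products, so $T\in\CB_{\zeta}$ does not yield $T^{I}\in\CB_{\zeta}$, and the cosilting equality $\Cogen(T)=\CB_{\zeta}$ is simply not available from the hypotheses. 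With that equality withdrawn, both of your inclusions collapse: the ``immediate'' inclusion $\CY_{R}\subseteq\CB_{\zeta}\cap\modr R$ is no longer immediate, and in the reverse inclusion you cannot assert $X/\Rej_{T}(X)\in\Cogen(T)=\CB_{\zeta}$.

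Your concluding remark that conditions (3) and (4) ``are genuinely not needed'' in this proposition should have been a warning sign, and it is in fact false. The paper's proof uses them in an essential way for \emph{both} inclusions, via Proposition~\ref{Angeleri_Proposition_1.1.1}: condition~(4) guarantees that $\Delta_{R}(X)$ is a finitely generated left $S$-module for $X\in\modr R$, and then Proposition~\ref{Angeleri_Proposition_1.1.1} (together with condition~(1), $T\in\CB_{\zeta}$) produces an exact sequence $0\to X\to T^{n}\to Z\to 0$ with $Z\in\CB_{\zeta}$. Since $T^{n}$ is a \emph{finite} power, closure of $\CB_{\zeta}$ under finite direct sums and under submodules (Lemma~\ref{closure_prop}(1)) gives $X\in\CB_{\zeta}$ — this is the replacement for the cosilting equality that makes the first inclusion work. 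The same mechanism, applied to $X/\Rej_{T}(X)$, shows $X/\Rej_{T}(X)\in\CB_{\zeta}$ in the second inclusion; only then does your intended appeal to Lemma~\ref{Angeleri_Proposition_1.1.2} (using condition~(2)) close the argument. The skeleton of your second inclusion — reduce to $X/\Rej_{T}(X)$ and invoke Lemma~\ref{Angeleri_Proposition_1.1.2} — is the right idea, but the crucial verification that $X/\Rej_{T}(X)\in\CB_{\zeta}$ must go through Proposition~\ref{Angeleri_Proposition_1.1.1}, not through an unavailable identification of $\Cogen(T)$ with $\CB_{\zeta}$.
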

\begin{proof} Let $X\in\CY_{R}$. Then $X \in \Cogen(T_{R})$ and $X \in \modr R$. Since $T_{R}$ is finitely cosilting with respect to $\zeta$, we have that $T \in \CB_{\zeta}$ and $\Delta(X)$ is finitely generated left $S$-module. By Proposition \ref{Angeleri_Proposition_1.1.1}, there is an exact sequence $0 \to X \overset{}\longrightarrow T^{n} \overset{}\longrightarrow Z \to 0$ with $Z \in \CB_{\zeta}$. From Lemma \ref{closure_prop}(1), we obtain that $X\in\CB_{\zeta}$.

For the reverse inclusion, let consider $X \in \CB_{\zeta} \cap \modr R$. In general, we have $X/\Rej_{T}(X) \in \Cogen(T_{R})$ and $\Delta(X/\Rej_{T}(X)) \cong \Delta(X)$. Since $T_{R}$ is finitely cosilting with respect to $\zeta$, we have that $T \in \CB_{\zeta}$ and $\Delta(X) \in S \lmod$, hence we have $\Delta(X/\Rej_{T}(X)) \in S\lmod$. By Proposition \ref{Angeleri_Proposition_1.1.1}, there is an exact sequence $0 \to X/\Rej_{T}(X) \overset{f}\longrightarrow T^{n} \overset{g}\longrightarrow Z \to 0$ with $Z \in \CB_{\zeta}$. From Lemma \ref{closure_prop}(1), $X/\Rej_{T}(X) \in \CB_{\zeta}$, hence, by Lemma \ref{Angeleri_Proposition_1.1.2}, we have $X\in\Cogen(T_{R})$.
\end{proof}

\begin{proposition}\label{Angeleri_Proposition_1.2.1} If $T_{R}$ is a finitely cosilting $R$-module with respect to the injective copresentation $\zeta : Q_{0} \to Q_{1}$, then $\CY_{R} \subseteq \Refl_{\delta}$.
\end{proposition}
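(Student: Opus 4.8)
The plan is to show that for every $X \in \CY_R$ the evaluation map $\delta_X : X \to \Delta_S\Delta_R(X)$ is an isomorphism. Injectivity is essentially free: by definition of $\CY_R$ we have $X \in \Cogen(T_R)$, so $\Rej_T(X) = \Ker(\delta_X) = 0$, i.e. $\delta_X$ is a monomorphism. The real work is surjectivity, and I would extract it from the exact sequence produced by Proposition~\ref{Angeleri_Proposition_1.1.1}. Indeed, since $T_R$ is finitely cosilting, condition~(4) gives that $\Delta_R(X)$ is a finitely generated left $S$-module (as $X$ is finitely generated), and condition~(1) gives $T \in \CB_\zeta$; hence Proposition~\ref{Angeleri_Proposition_1.1.1} yields an exact sequence
\[
0 \to X \overset{f}\longrightarrow T^{n} \overset{g}\longrightarrow Z \to 0
\]
with $Z \in \CB_\zeta$. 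By Lemma~\ref{closure_prop}(1) also $X \in \CB_\zeta$, so $X \in \CB_\zeta \cap \modr R = \CY_R$ (Proposition~\ref{Angeleri_Proposition_1.1.3}); in particular $Z$, being a quotient of $T^n$, lies in $\Cogen(T_R) \cap \modr R = \CY_R$ as well.

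Next I would apply the bifunctor $\Delta$ twice and chase the resulting square. Applying $\Delta_R$ to the sequence above and using $\Gamma_R(T^n) = \Ext^1_R(T^n,T) = 0$ (again from $T^n \in \CB_\zeta$ and Lemma~\ref{closure_prop}(2)) together with the fact that $\Delta_R(f)$ is an epimorphism (which is how Proposition~\ref{Angeleri_Proposition_1.1.1} was set up), one gets a short exact sequence of left $S$-modules
\[
0 \to \Delta_R(Z) \overset{\Delta_R(g)}\longrightarrow \Delta_R(T^n) \overset{\Delta_R(f)}\longrightarrow \Delta_R(X) \to 0 .
\]
Now apply $\Delta_S$ to this sequence. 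Since $\Delta_S(S^n) \cong T^n$, and the natural transformation $\delta$ is compatible with $f$ and $g$, I obtain a commutative ladder with rows the original sequence $0 \to X \to T^n \to Z \to 0$ (exact) and the sequence $\Delta_S\Delta_R$ applied to it, linked by the evaluation maps $\delta_X, \delta_{T^n}, \delta_Z$. The middle map $\delta_{T^n}$ is an isomorphism because $\delta_T$ is: $T \in \CB_\zeta$ implies $\Gamma_R(T) = 0$, which forces $\Delta_S\Delta_R(T) \cong T$ via $\delta_T$ — this is the standard computation showing a module with $\Ext^1$-vanishing in $\CB_\zeta$ is $\delta$-reflexive, and it passes to finite powers $T^n$.

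With $\delta_{T^n}$ an isomorphism and $\delta_X, \delta_Z$ monomorphisms (both $X$ and $Z$ lie in $\Cogen(T_R)$), a diagram chase — the Snake Lemma or the Five Lemma applied after checking right-exactness of $\Delta_S$ on the relevant sequence — gives that $\delta_X$ is an isomorphism, provided I can verify exactness of the bottom row on the left. The point where I expect the main obstacle is precisely establishing that $\Delta_S$ applied to $0 \to \Delta_R(Z) \to \Delta_R(T^n) \to \Delta_R(X) \to 0$ stays exact, i.e. that $\Ext^1_S(\Delta_R(X), T) = 0$, or at least that the connecting map into it vanishes; this should follow by symmetry from the $\CB$-membership of the $S$-side modules, but it may require invoking the left-handed analogue of Lemma~\ref{closure_prop} or an argument that $\Delta_R(X) \in \Cogen({}_S T)$ together with a dual of Proposition~\ref{Angeleri_Proposition_1.1.1}. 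Once that exactness is in hand, the Five Lemma closes the argument and $X \in \Refl_\delta$.
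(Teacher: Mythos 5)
Your plan is structurally the same as the paper's proof: invoke Proposition~\ref{Angeleri_Proposition_1.1.1} to produce $0\to X\to T^n\to Z\to 0$ with $Z\in\CB_\zeta$, form the commutative ladder linking it to its double dual via $\delta_X$, $\delta_{T^n}$, $\delta_Z$, and chase. However, you leave a step explicitly open, and you mis-justify a smaller one.

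The small point: $Z$ being a quotient of $T^n$ does \emph{not} by itself put $Z$ in $\Cogen(T_R)$, since $\Cogen$ is not closed under quotients. The correct route (which you already have at hand) is that $Z$ is finitely generated (quotient of $T^n$ with $T$ finitely generated) and $Z\in\CB_\zeta$, so $Z\in\CB_\zeta\cap\modr R=\CY_R$ by Proposition~\ref{Angeleri_Proposition_1.1.3}; hence $Z\in\Cogen(T_R)$ and $\delta_Z$ is monic.

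The substantive point: the ``main obstacle'' you flag --- whether $\Delta_S$ keeps $0\to\Delta_R(Z)\to\Delta_R(T^n)\to\Delta_R(X)\to 0$ short exact, i.e.\ whether $\Ext^1_S(\Delta_R(X),{}_S T)=0$ --- is not actually needed, and the paper never addresses it. Since $\Delta_S=\Hom_S(-,T)$ is left exact, applying it yields $0\to\Delta^2(X)\to\Delta^2(T^n)\to\Delta^2(Z)$ exact automatically, with no claim of surjectivity at the right end. That is already enough. In the ladder with top row short exact and bottom row left exact, the Snake Lemma (not the Five Lemma) gives an exact sequence
\[
0\to\Ker\delta_X\to\Ker\delta_{T^n}\to\Ker\delta_Z\to\Coker\delta_X\to\Coker\delta_{T^n},
\]
and the facts that $\delta_{T^n}$ is an isomorphism and $\delta_Z$ is a monomorphism force $\Ker\delta_X=0$ and $\Coker\delta_X=0$, so $\delta_X$ is an isomorphism. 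Dropping the Five Lemma fallback and committing to the Snake Lemma, as the paper does, removes the obstacle entirely.
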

\begin{proof} Let $X \in \CY_{R}$. Then $X \in \Cogen(T_{R})$ and $X \in \modr R$. Since $T$ is finitely cosilting with respect to $\zeta$, we have that $\Delta(X) \in S\lmod$, where $S=\End_{R}(T)$. By Proposition \ref{Angeleri_Proposition_1.1.1}, there is an exact sequence $0 \to X \overset{f}\longrightarrow T^{n} \overset{g}\longrightarrow Z \to 0$ with $Z \in \CB_{\zeta}$. It follows, by Lemma \ref{closure_prop}(2), that $\Gamma(Z) = 0$, hence the induced sequence $0 \to \Delta(Z) \overset{\Delta(g)}\longrightarrow \Delta(T^{n}) \overset{\Delta(f)}\longrightarrow \Delta(X) \to 0$ is also exact. Now, consider the following commutative diagram with exact rows

\[\xymatrixcolsep{3pc}\xymatrixrowsep{3pc}
\xymatrix{
&0\ar[r]^{} &X\ar[r]^{f}\ar[d]_{\delta_{X}} &T^{n}\ar[r]^{g}\ar[d]^{\delta_{T^{n}}} &Z\ar[r]\ar[d]^{\delta_{Z}} &0\\
&0\ar[r]^{} &\Delta^{2}(X)\ar[r]^{\Delta^{2}(f)} &\Delta^{2}(T^{n})\ar[r]^{\Delta^{2}(g)} &\Delta^{2}(Z)\\
}
\]

Since $T_{R}$ is finitely generated, we have that $Z$ is finitely generated, hence $Z \in \CB_{\zeta} \cap \modr R$. By Proposition \ref{Angeleri_Proposition_1.1.3}, we have $Z \in \CY_{R}$, so that $Z \in \Cogen(T_{R})$ and thus $\delta_{Z}$ is a monomorphism. From the fact that $\delta_{T^{n}}$ is an isomorphism, it follows, by applying the Snake Lemma to the above diagram, that $\delta_{X}$ is an isomorphism.

\end{proof}

\begin{proposition}\label{Angeleri_Proposition_2.6_1} Let $T_{R}$ be a finitely presented $R$-module. If $R$ is a right coherent ring and $T_{R}$ is finitely cosilting with respect to the injective copresentation $\zeta : Q_{0} \to Q_{1}$, then the class $\CY_{R}$ consists of the finitely presented modules belonging to the class $\CB_{\zeta}$.
\end{proposition}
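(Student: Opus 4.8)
The plan is to leverage Proposition \ref{Angeleri_Proposition_1.1.3}, which already identifies $\CY_{R}$ with $\CB_{\zeta} \cap \modr R$, and then to upgrade ``finitely generated'' to ``finitely presented'' by a coherence argument. Since a finitely presented module is in particular finitely generated, the inclusion of the finitely presented modules of $\CB_{\zeta}$ into $\CB_{\zeta} \cap \modr R = \CY_{R}$ will be immediate; and for $X \in \CY_{R}$ the membership $X \in \CB_{\zeta}$ is already granted by Proposition \ref{Angeleri_Proposition_1.1.3}. Hence everything reduces to proving that each $X \in \CY_{R}$ is finitely presented, and that is the step I would carry out in detail.

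So I would fix $X \in \CY_{R}$, that is, $X \in \Cogen(T_{R}) \cap \modr R$. Because $T_{R}$ is finitely cosilting with respect to $\zeta$, condition (4) of the definition yields that the left $S$-module $\Delta(X) = \Hom_{R}(X,T)$ is finitely generated, while condition (1) gives $T \in \Ker\CoDef_{\zeta} = \CB_{\zeta}$. The hypotheses of Proposition \ref{Angeleri_Proposition_1.1.1} are therefore satisfied, so I obtain a short exact sequence $0 \to X \overset{}\longrightarrow T^{n} \overset{}\longrightarrow Z \to 0$ for some integer $n \geq 0$; in particular $X$ embeds in the finite direct power $T^{n}$.

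At this point the standing hypothesis that $T_{R}$ is finitely presented comes in: $T^{n}$ is then finitely presented, and since $R$ is right coherent every finitely presented right $R$-module is coherent (see \cite{Wisbauer_1991}), so $T^{n}$ is coherent and each of its finitely generated submodules is finitely presented. As $X$ is a finitely generated submodule of $T^{n}$, it follows that $X$ is finitely presented, which finishes the argument. I expect the only delicate point to be precisely this last step: finite presentation of $T$ (and not merely its finite generation, as in the bare definition of a finitely cosilting module) is what makes $T^{n}$ coherent over the coherent ring $R$ and hence forces the finitely generated $X$ to be finitely presented; the rest is a routine unwinding of the definitions together with Propositions \ref{Angeleri_Proposition_1.1.1} and \ref{Angeleri_Proposition_1.1.3}.
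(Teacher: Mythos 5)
Your proof is correct, and the key step is the same as the paper's: given $X \in \CY_{R}$, invoke Proposition \ref{Angeleri_Proposition_1.1.1} (using conditions (1) and (4) of the definition of finitely cosilting) to embed $X$ into some $T^{n}$, observe that $T^{n}$ is finitely presented hence coherent over the right coherent ring $R$, and conclude that the finitely generated submodule $X$ is finitely presented. Where you diverge from the paper is in the organization of the two inclusions: you factor the statement through Proposition \ref{Angeleri_Proposition_1.1.3}, reading the new proposition as the already-established identity $\CY_{R} = \CB_{\zeta} \cap \modr R$ plus a finite-presentation upgrade. This makes the reverse inclusion (a finitely presented module in $\CB_{\zeta}$ lies in $\CY_{R}$) immediate, since finitely presented implies finitely generated, and it also hands you $X \in \CB_{\zeta}$ for free in the forward direction. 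The paper instead re-derives both memberships from scratch: for $X \in \CY_{R}$ it deduces $X \in \CB_{\zeta}$ from Lemma \ref{closure_prop}(1) applied to the embedding $X \hookrightarrow T^{n}$, and for the reverse inclusion it runs the rejection argument of Lemma \ref{Angeleri_Lemma_2.5} again, showing $X' = \Rej_{T}(X) \in \Ker\Delta_{R} \cap \Ker\CoDef_{\zeta}$ and concluding $X' = 0$ from condition (2). Your version is leaner and makes the logical dependence on Proposition \ref{Angeleri_Proposition_1.1.3} explicit; the paper's version is more self-contained at the cost of repeating an argument it has essentially already carried out.
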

\begin{proof} Let $X \in \CY_{R}$. Then $X\in\Cogen(T_{R})$ and $X\in\modr R$. Since $T_{R}$ is finitely cosilting with respect to $\zeta$, we have $T \in \CB_{\zeta}$ and $\Delta_{R}(X)$ is finitely generated left $S$-module, where $S=\End_{R}(T)$. It follows, by Proposition \ref{Angeleri_Proposition_1.1.1}, that there is an exact sequence $0 \to X \overset{f}\longrightarrow T^{n} \overset{g}\longrightarrow Z \to 0$ with $Z \in \CB_{\zeta}$. From the facts that $T^{n}$ is finitely presented, because $T$ is finitely presented, and $R$ is a right coherent ring, we obtain that $T^{n}$ is coherent in $\Modr R$ (by \cite[26.6]{Wisbauer_1991}). Thus $X$ is finitely presented. Since $T\in\CB_{\zeta}$, it follows, by Lemma \ref{closure_prop}(1), that $X\in\CB_{\zeta}$.

Let $X$ be a finitely presented $R$-module such that $X \in \CB_{\zeta}$. If we denote $\Rej_{T}(X)$ by $X^{\prime}$, we have, by Lemma \ref{Angeleri_Lemma_2.5}, that $X^{\prime} \in \Ker\Delta_{R}$. Since $X \in \CB_{\zeta}$, we obtain, by Lemma \ref{closure_prop}(1), that $X^{\prime} \in \CB_{\zeta}$. Since $T_{R}$ is finitely cosilting, it follows that $X^{\prime} = 0$, hence $X\in\Cogen(T_{R})$. Therefore $X \in \Cogen(T_{R}) \cap \modr R$, i.e. $X\in\CY_{R}$.
\end{proof}

\section{Finitely cosilting bimodules and a Cosilting Theorem }\label{section_finitely_cosilting_bimodules}

We start this section with the definition of finitely cosilting bimodule.

\begin{definition} Let $_{S}T_{R}$ be an $(S,R)$-bimodule. Then $T$ is called {\sl finitely cosilting bimodule} if:
\begin{itemize}
   \item[(1)] $_{S}T_{R}$ is a faithfully balanced bimodule;
   \item[(2)] $T_{R}$ and $_{S}T$ are finitely cosilting modules (with respect to the injective copresentations $\zeta : Q_{0} \to Q_{1}$ and $\tau : S_{0} \to S_{1}$, respectively).
\end{itemize}
\end{definition}

\begin{lemma}\label{fg_cogen_is_fp} Let $_{S}T_{R}$ be a finitely cosilting bimodule. Then every finitely generated module $X_{R} \in \Cogen(T_{R})$ is finitely presented.
\end{lemma}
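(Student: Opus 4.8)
The plan is to exploit the symmetry in the finitely cosilting bimodule structure: since $_ST_R$ is faithfully balanced and both $T_R$ and $_ST$ are finitely cosilting, the contravariant functor $\Delta_R = \Hom_R(-,T)$ carries finitely generated right $R$-modules to finitely generated left $S$-modules (condition (4) of finitely cosilting), and symmetrically $\Delta_S$ carries finitely generated left $S$-modules to finitely generated right $R$-modules. So I would first reduce the problem to Proposition \ref{Angeleri_Proposition_1.2.2}: that proposition, applied with the roles of $R$ and $S$ interchanged (using that $_ST$ is finitely generated and $\Delta_S$ preserves finite generation), tells us in particular that $\Delta_S$ carries finitely generated left $S$-modules to finitely presented right $R$-modules, and that $\Delta_S Y$ is finitely presented whenever $_SY \in \Cogen(_ST)$ is finitely generated.

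Now let $X_R \in \Cogen(T_R)$ be finitely generated. The key step is to realize $X$ as $\Delta_S$ of something finitely generated in $\Cogen(_ST)$. Since $X$ is finitely generated and $\Delta_R$ preserves finite generation, $\Delta_R(X) = {}_S\Hom_R(X,T)$ is a finitely generated left $S$-module; moreover $\Delta_R(X) \in \Cogen(_ST)$ always (as the image of any $\Delta$ is $\delta$-torsionless, hence $T$-cogenerated, as noted in the Preliminaries). Since $T_R$ is finitely cosilting, Proposition \ref{Angeleri_Proposition_1.2.1} gives $X \in \Refl_\delta$, i.e.\ the evaluation map $\delta_X : X \to \Delta_S\Delta_R(X)$ is an isomorphism — here I am using that $\CY_R = \Cogen(T_R)\cap\modr R$ and that $X$ lies in it. Wait: to invoke Proposition \ref{Angeleri_Proposition_1.2.1} I need $X\in\CY_R$, which holds exactly because $X$ is finitely generated and $T$-cogenerated; but I also need to know $X$ is in $\modr R$, which it is. So $X \cong \Delta_S(\Delta_R(X))$ as right $R$-modules, with $\Delta_R(X)$ a finitely generated left $S$-module lying in $\Cogen(_ST)$.

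Finally, apply the symmetric form of Proposition \ref{Angeleri_Proposition_1.2.2}(1): since $_S\Delta_R(X)$ is a finitely generated left $S$-module in $\Cogen(_ST)$ and $_ST$ is finitely cosilting (so by Propositions \ref{Angeleri_Corollary_1.5} and \ref{Angeleri_Theorem_3.7}, $T_R$ is $\pi$-coherent as a right $R$-module), we conclude $\Delta_S(\Delta_R(X))$ is finitely presented. Hence $X \cong \Delta_S(\Delta_R(X))$ is finitely presented, as desired. The main obstacle I anticipate is bookkeeping the left/right and $R$/$S$ sides carefully when transposing Propositions \ref{Angeleri_Proposition_1.2.1} and \ref{Angeleri_Proposition_1.2.2} across the bimodule — in particular checking that the hypothesis ``$\Delta$ carries finitely generated modules to finitely generated modules'' is available on the $S$-side, which follows from condition (4) of $_ST$ being finitely cosilting, and that faithful balance is what legitimizes identifying $\Delta_S\Delta_R$-reflexivity on the $R$-side with genuine reflexivity. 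An alternative, more self-contained route avoids Proposition \ref{Angeleri_Proposition_1.2.1} entirely: take a monomorphism $0\to X\to T^n$ from Proposition \ref{Angeleri_Proposition_1.1.1} (valid since $\Delta_R(X)$ is finitely generated), with cokernel $Z\in\CB_\zeta$; then $Z$ is finitely generated, hence by Proposition \ref{Angeleri_Proposition_1.1.3} lies in $\CY_R$, and since $R$ being right coherent is not assumed here, I would instead note $T^n$ is finitely presented (as $_ST$ finitely cosilting forces $T_R$ finitely presented via $\pi$-coherence of $_RT$... ) — this is messier, so I prefer the reflexivity route above.
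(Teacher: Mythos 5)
Your proof is correct, and it arrives at the same engine that the paper uses: the combination of Proposition~\ref{Angeleri_Corollary_1.5} and Theorem~\ref{Angeleri_Theorem_3.7}, applied with the roles of $R$ and $S$ interchanged (which is legitimate precisely because $_{S}T_{R}$ is faithfully balanced and because condition (4) of $_{S}T$ being finitely cosilting guarantees that $\Delta_S$ sends finitely generated $S$-modules to finitely generated $R$-modules), yielding that $T_R$ is $\pi$-coherent. However, you wrap this in a detour that is entirely superfluous. The paper's proof simply observes: once $T_R$ is $\pi$-coherent, then by the very definition of $\pi$-coherence every finitely generated $R$-module cogenerated by $T_R$ is finitely presented, and we are done. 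Your reflexivity step --- invoking Proposition~\ref{Angeleri_Proposition_1.2.1} to write $X \cong \Delta_S\Delta_R(X)$, then applying the (transposed) Proposition~\ref{Angeleri_Proposition_1.2.2} to see that $\Delta_S$ carries finitely generated modules to finitely presented ones --- does give the conclusion, but it is a longer path to the same place: that second proposition is itself proved by establishing $\pi$-coherence, and once you have $\pi$-coherence in hand you never need to know that $X$ is reflexive. You essentially notice this yourself in the parenthetical remark ``(so by Propositions~\ref{Angeleri_Corollary_1.5} and~\ref{Angeleri_Theorem_3.7}, $T_R$ is $\pi$-coherent)'' --- at that point the argument is finished without $\delta_X$ ever being mentioned. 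The alternative route you sketch and dismiss as ``messier'' is also more roundabout than necessary for the same reason. So: the mathematics is sound, the dependencies on the bimodule hypotheses (faithful balance, condition (3) and (4) on the $S$-side) are correctly identified, but the reflexivity machinery should be cut.
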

\begin{proof} First, we show that the module $T_{R}$ is $\pi$-coherent. Let $_{S}Y \in S\lmod$, i.e. $_{S}Y$ is finitely generated left $S$-module. Since $_{S}T$ is finitely cosilting module, we have that $\Hom_{S}(Y,T)$ is finitely generated right $R$-module and then, by Proposition \ref{Angeleri_Corollary_1.5}, $_{S}Y$ has an $\add(_{S}T)$-preenvelope. Since $_{S}T$ is finitely generated, it follows, by Theorem \ref{Angeleri_Theorem_3.7}, that $T_{R}$ is $\pi$-coherent.

Now, if we consider $X_{R} \in \Cogen(T_{R})$ to be a finitely generated $R$-module, we observe that $X_{R}$ is finitely presented.
\end{proof}

\begin{corollary}\label{Angeleri_Prposition_4.3_1} Let $_{S}T_{R}$ be a finitely cosilting bimodule. Then $T_{R}$ is finitely presented and $R$ is a right coherent ring.
\end{corollary}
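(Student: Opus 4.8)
The plan is to deduce both conclusions from Lemma \ref{fg_cogen_is_fp}, which has already done the essential work (namely, that every finitely generated $X_R \in \Cogen(T_R)$ is finitely presented, equivalently that $T_R$ is $\pi$-coherent). First I would establish that $T_R$ itself is finitely presented: since $\,_ST_R$ is a finitely cosilting bimodule, condition (2) of the definition gives that $T_R$ is a finitely cosilting module, so in particular $T_R$ is finitely generated. Trivially $T_R \in \Cogen(T_R)$ (via the identity map $T \to T^{\{1\}}$), so Lemma \ref{fg_cogen_is_fp} applies and yields that $T_R$ is finitely presented. This is immediate and should be stated in one line.

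For the second assertion, that $R$ is right coherent, the key point is that the bimodule is \emph{faithfully balanced}, hence in particular $T_R$ is a faithful $R$-module. As noted in the Preliminaries, faithfulness of $T_R$ implies $R_R \in \Cogen(T_R)$ (one embeds $R$ into a product of copies of $T$ using enough homomorphisms to separate points, using faithfulness). Now take an arbitrary finitely generated submodule $K$ of $R_R$. Then $K \in \Cogen(T_R)$ since $\Cogen(T_R)$ is closed under submodules, and $K$ is finitely generated, so Lemma \ref{fg_cogen_is_fp} gives that $K$ is finitely presented. Thus every finitely generated submodule of $R_R$ is finitely presented, which is precisely the statement that $R_R$ is a coherent module, i.e.\ $R$ is a right coherent ring.

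The only mildly delicate step is the claim $R_R \in \Cogen(T_R)$, i.e.\ that faithfulness really produces the required embedding; but this is exactly the standard fact recorded in the Preliminaries (a module is $\delta$-torsionless iff it is $T$-cogenerated, and for $R_R$ being $\delta$-torsionless is the vanishing of the reject $\Rej_T(R) = \bigcap_{f} \Ker f$, which is the annihilator-type condition equivalent to faithfulness of $T_R$). I would simply cite this and move on. I do not anticipate a genuine obstacle here: the corollary is essentially a packaging of Lemma \ref{fg_cogen_is_fp} together with the observation that faithfulness feeds $R$ and all its finitely generated submodules into the hypothesis of that lemma.
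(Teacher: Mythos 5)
Your proof is correct and follows essentially the same route as the paper: finitely presentedness of $T_R$ from Lemma \ref{fg_cogen_is_fp} applied to $T_R$ itself, and right coherence of $R$ by noting that faithfulness gives $R_R\in\Cogen(T_R)$ so every finitely generated right ideal lies in $\Cogen(T_R)$ and is therefore finitely presented by the same lemma. The only cosmetic difference is that you spell out the standard identification $\Rej_T(R)=\mathrm{ann}_R(T)$ to justify $R_R\in\Cogen(T_R)$, which the paper simply asserts.
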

\begin{proof} We have that $T_{R}$ is finitely presented by Lemma \ref{fg_cogen_is_fp}.

Let $I$ be a right ideal of $R$ which is finitely generated. Since $T_{R}$ is faithful, we have $R_{R} \in \Cogen(T_{R})$, hence $I_{R} \in \Cogen(T_{R})$. Since $I_{R}$ is finitely generated, it follows, by Lemma \ref{fg_cogen_is_fp}, that $I_{R}$ is finitely presented. Therefore $R$ is a right coherent ring.
\end{proof}

\begin{proposition}\label{rezultat_1} Let $_{S}T_{R}$ be a finitely cosilting bimodule. Then the following assertions hold.
\begin{itemize}
   \item[(1)] The pair $(\CX_{R}, \CY_{R})$ is a torsion pair in $\modr R$.
   \item[(2)] The pair $({_{S}\CX}, {_{S}\CY})$ is a torsion pair in $S \lmod$.
\end{itemize}
\end{proposition}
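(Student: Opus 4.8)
The plan is to verify directly the two defining properties of a torsion pair for $(\CX_R,\CY_R)$ in $\modr R$ — vanishing of $\Hom_R(\CX_R,\CY_R)$, and the existence, for every $M\in\modr R$, of a short exact sequence $0\to X\to M\to Y\to 0$ with $X\in\CX_R$ and $Y\in\CY_R$ — and then to deduce (2) from the left-hand version of the same argument, using that ${}_{S}T$ is finitely cosilting with respect to $\tau$ and that ${}_{S}T_{R}$ is faithfully balanced, so that $\End_{S}({}_{S}T)\cong R$.

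For the orthogonality I would argue as follows: given $X\in\CX_R=\Ker\Delta_R\cap\modr R$ and $Y\in\CY_R=\Cogen(T_R)\cap\modr R$, fix a monomorphism $\iota\colon Y\hookrightarrow T^{I}$. For any $f\in\Hom_R(X,Y)$, the composite $\iota f$ lies in $\Hom_R(X,T^{I})\cong\Hom_R(X,T)^{I}=\Delta_R(X)^{I}=0$, and since $\iota$ is monic, $f=0$; hence $\Hom_R(\CX_R,\CY_R)=0$.

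For the short exact sequence the natural candidate is the reject sequence $0\to\Rej_T(M)\to M\to M/\Rej_T(M)\to 0$. The quotient $M/\Rej_T(M)$ is always $T$-cogenerated and, being a quotient of a finitely generated module, finitely generated, so it lies in $\CY_R$; the real content is that $\Rej_T(M)\in\Ker\Delta_R$ and is finitely generated, which is exactly Lemma \ref{Angeleri_Lemma_2.5}(1). To apply that lemma I must check its standing hypothesis that every module in $\CY_R$ is finitely presented and belongs to $\CB_\zeta$: by Corollary \ref{Angeleri_Prposition_4.3_1} the ring $R$ is right coherent and $T_R$ is finitely presented, whence by Lemma \ref{fg_cogen_is_fp} every finitely generated $T$-cogenerated module is finitely presented, i.e.\ all of $\CY_R$ consists of finitely presented modules; and Proposition \ref{Angeleri_Proposition_1.1.3} gives $\CY_R=\CB_\zeta\cap\modr R$, so in particular $\CY_R\subseteq\CB_\zeta$. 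With these in hand, Lemma \ref{Angeleri_Lemma_2.5} yields $\Rej_T(M)\in\CX_R$ and $M/\Rej_T(M)\in\CY_R$. The remaining maximality clauses are then automatic: if $N\in\modr R$ satisfies $\Hom_R(N,\CY_R)=0$, then the surjection $N\to N/\Rej_T(N)$, whose target lies in $\CY_R$, must be zero, so $N=\Rej_T(N)\in\CX_R$; dually, if $\Hom_R(\CX_R,N)=0$, then the inclusion $\Rej_T(N)\hookrightarrow N$, whose source lies in $\CX_R$, is zero, so $N\in\Cogen(T_R)$ and hence $N\in\CY_R$.

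Finally, for (2) I would transpose everything: ${}_{S}T$ is finitely cosilting with respect to $\tau$, the faithfully balanced hypothesis gives $\End_{S}({}_{S}T)\cong R$, and since Lemma \ref{fg_cogen_is_fp} is symmetric in the two sides, the left-module analogues of Corollary \ref{Angeleri_Prposition_4.3_1}, Proposition \ref{Angeleri_Proposition_1.1.3} and Lemma \ref{Angeleri_Lemma_2.5} all hold with $\modr R$, $T_R$, $\zeta$ replaced by $S\lmod$, ${}_{S}T$, $\tau$; repeating the three steps above then shows that $({}_{S}\CX,{}_{S}\CY)$ is a torsion pair in $S\lmod$. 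I expect the only genuine work to be the bookkeeping in the previous paragraph — correctly combining Corollary \ref{Angeleri_Prposition_4.3_1}, Lemma \ref{fg_cogen_is_fp} and Proposition \ref{Angeleri_Proposition_1.1.3} to license the use of Lemma \ref{Angeleri_Lemma_2.5}; once the reject sequence is known to land in $\CX_R$ and $\CY_R$, everything else is purely formal.
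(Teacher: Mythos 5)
Your argument is correct and is essentially the paper's proof re-packaged: the paper verifies the torsion pair via the two mutual-orthogonality conditions $\CX_R = {}^{\perp}\CY_R$ and $\CY_R = \CX_R^{\perp}$ inside $\modr R$, whereas you explicitly exhibit the torsion short exact sequence $0\to\Rej_T(M)\to M\to M/\Rej_T(M)\to 0$ and then deduce maximality, but both presentations rest on exactly the same engine — the reject sequence and Lemma \ref{Angeleri_Lemma_2.5}, whose standing hypothesis is licensed by Corollary \ref{Angeleri_Prposition_4.3_1} together with Proposition \ref{Angeleri_Proposition_2.6_1}. Your alternative route to that hypothesis (Lemma \ref{fg_cogen_is_fp} for finite presentation plus Proposition \ref{Angeleri_Proposition_1.1.3} for $\CY_R\subseteq\CB_\zeta$) is equally valid and reaches the same place; it is a cosmetic rather than substantive difference.
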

\begin{proof}(1) First, we prove that, for every $X \in \modr R$, we have $\Hom_{R}(X,Y)=0$, for all $Y \in \CY_{R}$, if and only if $X \in \CX_{R}$.

Let $X\in\modr R$. If $\Hom_{R}(X,Y)=0$, for all $Y \in \CY_{R}$, it follows that $\Hom_{R}(X,T)=0$ (since $T_{R} \in \CY_{R}$) and thus $X\in\CX_{R}$. Conversely, assume that $X\in\CX_{R}$. Then $X\in\Ker\Delta_{R}$, i.e. $\Hom_{R}(X,T)=0$. If $Y \in \CY_{R}$, then $Y \in \Cogen(T_{R})$, hence there is a monomorphism $0 \to Y \overset{f}\longrightarrow T^{I}$, for some set $I$. Applying the covariant $\Hom_{R}(X,-)$ functor, we obtain that $0 \to \Hom_{R}(X,Y) \overset{\Hom_{R}(X,f)}\longrightarrow \Hom_{R}(X,T^{I})$ is a monomorphism. Then $\Hom_{R}(X,Y)=0$.

Second, we prove that, for every $Y \in \modr R$, we have $\Hom_{R}(X,Y)=0$, for all $X \in \CX_{R}$, if and only if $Y \in \CY_{R}$.

Suppose that $T_{R}$ is finitely cosilting with respect to the injective copresentation $\zeta : Q_{0} \to Q_{1}$. Since $_{S}T_{R}$ is finitely cosilting bimodule, it follows, by Corollary \ref{Angeleri_Prposition_4.3_1}, that $R$ is a right coherent ring and $T_{R}$ is finitely presented module. By Proposition \ref{Angeleri_Proposition_2.6_1}, we have that the class $\CY_{R}$ consists of finitely presented modules belonging to the class $\CB_{\zeta}$.

Let $Y \in \modr R$. Assume that $\Hom_{R}(X,Y)=0$, for all $X \in \CX_{R}$. By Lemma \ref{Angeleri_Lemma_2.5}, $Y^{\prime} \in \CX_{R}$, where $Y^{\prime} = \Rej_{T}(Y)$. Then $\Hom_{R}(Y^{\prime},Y) = 0$, hence $Y^{\prime}=0$ (because $Y^{\prime}$ is a submodule of $Y$) and thus $Y \in \Cogen(T_{R})$. Therefore $Y \in \CY_{R}$. Conversely, suppose that $Y\in\CY_{R}$. Then $Y\in\Cogen(T_{R})$, hence there is a monomorphism $0 \to Y \overset{f}\longrightarrow T^{I}$. Consider $X \in \CX_{R}$. Then $\Hom_{R}(X,T)=0$. Since the covariant $\Hom_{R}(X,-)$ functor is left exact, we have that $0 \to \Hom_{R}(X,Y) \overset{\Hom_{R}(X,f)}\longrightarrow \Hom_{R}(X,T^{I})$ is a monomorphism. Thus $\Hom_{R}(X,Y)=0$.

(2) It is proved similarly.
\end{proof}

\begin{proposition}\label{rezultat_0} Let $_{S}T_{R}$ be a finitely cosilting bimodule. Then $$\Delta_{R} : \CY_{R} \rightleftarrows {_{S}\CY} : \Delta_{S}$$ is a duality.
\end{proposition}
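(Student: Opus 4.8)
The plan is to show that the contravariant functors $\Delta_R$ and $\Delta_S$ restrict to mutually inverse dualities between $\CY_R$ and ${_S\CY}$. The work splits into two parts: (a) the restricted functors land in the right subcategories, and (b) the unit/counit of the adjunction become natural isomorphisms on these subcategories.

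For part (a), I would start with $X \in \CY_R$. By definition $\Delta_R(X) = \Hom_R(X,T)$, and since $X$ is finitely generated and $T_R$ is finitely cosilting, condition (4) of the definition gives that $\Delta_R(X) \in S\lmod$. Moreover $\Delta_R(X) \in \Cogen({_S T})$ always holds (as noted in the preliminaries, images of $\Delta$ are $T$-cogenerated, i.e. $\delta$-torsionless), so $\Delta_R(X) \in \Cogen({_S T}) \cap S\lmod = {_S\CY}$. The symmetric argument, using that ${_S T}$ is also finitely cosilting and that the bimodule is faithfully balanced, shows $\Delta_S$ maps ${_S\CY}$ into $\CY_R$.

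For part (b), the key input is Proposition \ref{Angeleri_Proposition_1.2.1}: since $T_R$ is finitely cosilting, $\CY_R \subseteq \Refl_\delta$, so $\delta_X : X \to \Delta_S\Delta_R(X)$ is an isomorphism for every $X \in \CY_R$. By symmetry (applying Proposition \ref{Angeleri_Proposition_1.2.1} to the finitely cosilting module ${_S T}$), $\delta_Y : Y \to \Delta_R\Delta_S(Y)$ is an isomorphism for every $Y \in {_S\CY}$. Since $\delta$ is a natural transformation, these families of isomorphisms are natural, so $\Delta_S\Delta_R \cong 1_{\CY_R}$ and $\Delta_R\Delta_S \cong 1_{{_S\CY}}$. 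Combined with part (a), this says exactly that the pair $(\Delta_R, \Delta_S)$ restricts to a duality between $\CY_R$ and ${_S\CY}$.

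The main obstacle is really the bookkeeping in part (a): one must check that the relevant finiteness and cogeneration conditions are symmetric in $R$ and $S$, which relies on unwinding that ${_S T_R}$ being a finitely cosilting bimodule means \emph{both} $T_R$ and ${_S T}$ are finitely cosilting and the bimodule is faithfully balanced (so that $S = \End_R(T)$ and $R = \End_S(T)$, and the functor $\Delta_S$ genuinely plays for ${_S T}$ the role that $\Delta_R$ plays for $T_R$). Once the set-up is symmetric, part (b) is an immediate consequence of Proposition \ref{Angeleri_Proposition_1.2.1} applied on each side, together with naturality of the evaluation maps.
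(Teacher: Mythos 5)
Your proof is correct and follows essentially the same route as the paper: both establish $\Delta_R(\CY_R)\subseteq {_S\CY}$ and $\Delta_S({_S\CY})\subseteq \CY_R$ directly from the finiteness condition in the definition plus the general fact that $\Delta$-images are $T$-cogenerated, and both invoke Proposition~\ref{Angeleri_Proposition_1.2.1} (on each side) to get $\CY_R\subseteq\Refl_\delta$ and ${_S\CY}\subseteq\Refl_\delta$, which together with naturality of $\delta$ yields the duality.
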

\begin{proof} By Proposition \ref{Angeleri_Proposition_1.2.1}, we have that $\CY_{R} \subseteq \Refl_{\delta}$ and similarly we can prove that ${_{S}\CY} \subseteq \Refl_{\delta}$.

Let $X \in \CY_{R}$. Then $X \in \Cogen(T_{R})$ and $X \in \modr R$. Since $T_{R}$ is a finitely cosilting module, we have $\Delta_{R}(X) \in S \lmod$. But, in general, we have that $\Delta_{R}(X) \in \Cogen(_{S}T)$ and thus $\Delta_{R}(X) \in {_{S}\CY}$. Therefore $\Delta_{R}(\CY_{R}) \subseteq {_{S}\CY}$. Similarly, we have $\Delta_{S}({_{S}\CY}) \subseteq \CY_{R}$.
\end{proof}

If $R$ is a ring then we denote by $\presr R$ (respectively, by $R \presl$) the subcategory of $\Modr R$ (respectively, of $R \Mod$) consisting of finitely presented $R$-modules. Moreover, given a bimodule $_{S}T_{R}$, we consider the following classes $\CY^{\prime}_{R} = \Cogen(T_{R}) \cap \presr R$ and ${_{S}\CY^{\prime}} = \Cogen(_{S}T) \cap S \presl$.

\begin{proposition}\label{rezultat_2} If $_{S}T_{R}$ is a finitely cosilting bimodule then $\CY_{R} = \CY_{R}^{\prime}$ and ${_{S}\CY} = {_{S}\CY^{\prime}}$. Consequently, we have that $\Delta_{R} : \CY_{R}^{\prime} \rightleftarrows {_{S}\CY^{\prime}} : \Delta_{S}$ is a duality.
\end{proposition}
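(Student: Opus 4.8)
The plan is to reduce everything to the facts already established. First observe that the two asserted equalities are each a pair of inclusions, one of which is completely formal: since every finitely presented module is finitely generated, we have $\presr R \subseteq \modr R$ and hence $\CY_{R}^{\prime} = \Cogen(T_{R}) \cap \presr R \subseteq \Cogen(T_{R}) \cap \modr R = \CY_{R}$, and symmetrically ${_{S}\CY^{\prime}} \subseteq {_{S}\CY}$. So the content is the reverse inclusions, i.e.\ that a finitely generated $T$-cogenerated module is automatically finitely presented.

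For the reverse inclusion $\CY_{R} \subseteq \CY_{R}^{\prime}$, I would simply apply Lemma \ref{fg_cogen_is_fp}: if $X \in \CY_{R}$ then $X_{R}$ is finitely generated and $X \in \Cogen(T_{R})$, so by that lemma $X$ is finitely presented, whence $X \in \Cogen(T_{R}) \cap \presr R = \CY_{R}^{\prime}$. For the inclusion ${_{S}\CY} \subseteq {_{S}\CY^{\prime}}$ I would invoke the left-hand analogue of Lemma \ref{fg_cogen_is_fp}. Here the one thing worth spelling out is that the notion of finitely cosilting bimodule is left--right symmetric: the bimodule $_{R}T_{S}$ (same $T$, sides interchanged) is again faithfully balanced, and $_{S}T$, $T_{R}$ are again finitely cosilting modules, so $_{R}T_{S}$ is a finitely cosilting bimodule. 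Applying Lemma \ref{fg_cogen_is_fp} to $_{R}T_{S}$ then gives that every finitely generated $_{S}Y \in \Cogen(_{S}T)$ is finitely presented, which yields ${_{S}\CY} \subseteq {_{S}\CY^{\prime}}$. Combining, $\CY_{R} = \CY_{R}^{\prime}$ and ${_{S}\CY} = {_{S}\CY^{\prime}}$.

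For the final ("consequently") assertion, I would quote Proposition \ref{rezultat_0}, which says $\Delta_{R} : \CY_{R} \rightleftarrows {_{S}\CY} : \Delta_{S}$ is a duality, and then substitute the equalities just proved: since $\CY_{R} = \CY_{R}^{\prime}$ and ${_{S}\CY} = {_{S}\CY^{\prime}}$, the same pair of functors restricts to a duality $\Delta_{R} : \CY_{R}^{\prime} \rightleftarrows {_{S}\CY^{\prime}} : \Delta_{S}$. I do not anticipate any real obstacle here; the only point requiring a word of care is the left--right symmetry used to get the left-module version of Lemma \ref{fg_cogen_is_fp}, and even that is immediate from the definition of a finitely cosilting bimodule.
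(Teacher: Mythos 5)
Your proof is correct, and it is actually a slight streamlining of the paper's argument. Where you invoke Lemma \ref{fg_cogen_is_fp} directly to get that every finitely generated $X\in\Cogen(T_{R})$ is finitely presented, the paper instead routes through Corollary \ref{Angeleri_Prposition_4.3_1} (to get that $R$ is right coherent and $T_{R}$ is finitely presented) and then Proposition \ref{Angeleri_Proposition_2.6_1} (to conclude that $\CY_{R}$ consists of finitely presented modules in $\CB_{\zeta}$). Since Corollary \ref{Angeleri_Prposition_4.3_1} is itself proved from Lemma \ref{fg_cogen_is_fp}, your route cuts out a detour and uses only the part of the conclusion actually needed here (the finite presentation, not the $\CB_{\zeta}$ membership). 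Your remark about left--right symmetry of the definition of finitely cosilting bimodule, used to transport Lemma \ref{fg_cogen_is_fp} to the $S$-side, is exactly the content hiding behind the paper's ``similarly,'' and spelling it out is a good idea. The final step---substituting the two equalities into the duality of Proposition \ref{rezultat_0}---matches the paper.
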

\begin{proof} The inclusion $\CY_{R}^{\prime} \subseteq \CY_{R}$ is obvious. By Corollary \ref{Angeleri_Prposition_4.3_1}, we have that $R$ is a right coherent ring and $T_{R}$ is finitely presented. From Proposition \ref{Angeleri_Proposition_2.6_1}, the class $\CY_{R}$ consists of finitely presented modules belonging to the class $\CB_{\zeta}$. Hence $\CY_{R} \subseteq \CY_{R}^{\prime}$. Therefore $\CY_{R} = \CY_{R}^{\prime}$. Similarly, we can prove that ${_{S}\CY} = {_{S}\CY^{\prime}}$ and, by Proposition \ref{rezultat_0}, we have that $\Delta_{R} : \CY_{R}^{\prime} \rightleftarrows {_{S}\CY^{\prime}} : \Delta_{S}$ is a duality.
\end{proof}

\begin{proposition}\label{rezultat_3} Let $_{S}T_{R}$ be a finitely cosilting bimodule. Then:
\begin{itemize}
   \item[(1)] $\Gamma_{S}\Delta_{R}$ carries finitely generated modules to zero;
   \item[(2)] $\Gamma_{R}\Delta_{S}$ carries finitely generated modules to zero.
\end{itemize}
\end{proposition}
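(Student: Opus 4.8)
The plan is to reduce the statement to two facts already established for the two sides of the bimodule: that $\Delta_{R}$ carries a finitely generated right $R$-module into the class ${_{S}\CY}$, and that every module in ${_{S}\CY}$ lies in the perpendicular class ${^{\perp}({_{S}T})}$.

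For (1), let $X\in\modr R$. The left $S$-module $\Delta_{R}(X)=\Hom_{R}(X,T)$ is always $\delta$-torsionless, hence --- as recorded in Section~\ref{section_preliminaries} --- cogenerated by ${_{S}T}$, so $\Delta_{R}(X)\in\Cogen({_{S}T})$; and since $T_{R}$ is finitely cosilting, condition~(4) of the definition gives $\Delta_{R}(X)\in S\lmod$. Therefore $\Delta_{R}(X)\in\Cogen({_{S}T})\cap S\lmod={_{S}\CY}$. Next I would pass to the left-hand analogues of the results proved above for $T_{R}$: by Corollary~\ref{Angeleri_Prposition_4.3_1} applied to ${_{S}T_{R}}$ with the roles of $R$ and $S$ interchanged (legitimate, since the definition of a finitely cosilting bimodule is symmetric in the two rings), the ring $S$ is left coherent and ${_{S}T}$ is finitely presented. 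Hence the left-hand version of Proposition~\ref{Angeleri_Proposition_2.6_1}, applied to the injective copresentation $\tau:S_{0}\to S_{1}$ of ${_{S}T}$, shows that ${_{S}\CY}$ consists precisely of the finitely presented left $S$-modules lying in $\CB_{\tau}$. In particular $\Delta_{R}(X)\in\CB_{\tau}$, and since $\CB_{\tau}\subseteq{^{\perp}({_{S}T})}$ by Lemma~\ref{closure_prop}(2) (with ${_{S}T}=\Ker\tau$), we obtain $\Gamma_{S}(\Delta_{R}(X))=\Ext^{1}_{S}(\Delta_{R}(X),{_{S}T})=0$. Part~(2) then follows by the identical argument with $R$ and $S$ exchanged.

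The only delicate point is the left/right bookkeeping: one has to make sure that the hypotheses needed to invoke Proposition~\ref{Angeleri_Proposition_2.6_1} on the left --- namely that ${_{S}T}$ is finitely presented and that $S$ is left coherent --- are genuinely provided by the symmetric form of Corollary~\ref{Angeleri_Prposition_4.3_1}, and that $\Cogen({_{S}T})\cap S\lmod$ is indeed the class ${_{S}\CY}$ figuring in Propositions~\ref{rezultat_0}--\ref{rezultat_2}. Once this is in place the conclusion is immediate; in particular, no passage through the reject $\Rej_{T}(X)$ or through Lemma~\ref{Angeleri_Lemma_2.5} is needed here, because $\Delta_{R}(X)$ already lands in ${_{S}\CY}$ directly.
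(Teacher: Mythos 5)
Your proof is correct, and it is actually more direct than the paper's. The paper passes through the reject: for a finitely generated $X$ it sets $X'=\Rej_{T}(X)$, invokes Lemma~\ref{Angeleri_Lemma_2.5} to obtain $X/X'\in\CY_{R}$ together with $\Delta_{R}(X)\cong\Delta_{R}(X/X')$, and then applies Proposition~\ref{rezultat_0} to place $\Delta_{R}(X/X')$ in ${_{S}\CY}$. Your argument short-circuits this: since $\Delta_{R}(X)$ is automatically $\delta$-torsionless (recorded in Section~\ref{section_preliminaries}), it already lies in $\Cogen({_{S}T})$ with no need to pass to a quotient of $X$, and condition~(4) of the definition puts it in $S\lmod$, so $\Delta_{R}(X)\in{_{S}\CY}$ directly. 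From there the two proofs coincide: ${_{S}\CY}\subseteq\CB_{\tau}\subseteq{^{\perp}({_{S}T})}$ gives $\Gamma_{S}\Delta_{R}(X)=0$. So you genuinely avoid Lemma~\ref{Angeleri_Lemma_2.5} and Proposition~\ref{rezultat_0}; the paper needs the coherence/finite-presentation input precisely to satisfy the hypotheses of Lemma~\ref{Angeleri_Lemma_2.5}, whereas you used it only to identify ${_{S}\CY}$ with the finitely presented part of $\CB_{\tau}$. One further economy you could have made: to get $\Delta_{R}(X)\in\CB_{\tau}$ you do not need left coherence of $S$ or Proposition~\ref{Angeleri_Proposition_2.6_1} at all --- the left-hand version of Proposition~\ref{Angeleri_Proposition_1.1.3} already yields ${_{S}\CY}=\CB_{\tau}\cap S\lmod$ from the single hypothesis that ${_{S}T}$ is finitely cosilting. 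Your left/right bookkeeping is sound; the definition of a finitely cosilting bimodule is symmetric in $R$ and $S$, so the transposed forms of Corollary~\ref{Angeleri_Prposition_4.3_1} and Proposition~\ref{Angeleri_Proposition_2.6_1} apply as you claim.
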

\begin{proof}(1) Suppose that $T_{R}$ (respectively, $_{S}T$) is a finitely cosilting module with respect to the injective copresentation $\zeta : Q_{0} \to Q_{1}$ (respectively, $\tau : S_{0} \to S_{1}$). Since $_{S}T_{R}$ is finitely cosilting bimodule, it follows, by Corollary \ref{Angeleri_Prposition_4.3_1}, that $R$ is a right coherent ring and $T_{R}$ is a finitely presented module. Analogous, it can be proved that $S$ is a left coherent ring and $_{S}T$ is finitely presented. By Proposition \ref{Angeleri_Proposition_2.6_1}, we have that the class $\CY_{R}$ (respectively, the class $_{S}\CY$) consists of finitely presented modules belonging to the class $\CB_{\zeta}$ (respectively, to the class $\CB_{\tau}$).

Let $X$ be a finitely generated $R$-module and let $X^{\prime} = \Rej_{T}(X)$. By Lemma \ref{Angeleri_Lemma_2.5}, $X/X^{\prime} \in \CY_{R}$ and $\Delta_{R}(X) \cong \Delta_{R}(X/X^{\prime})$. Since $\Delta_{R}(X/X^{\prime}) \in {_{S}\CY}$ (by Proposition \ref{rezultat_0}), hence $\Delta_{R}(X/X^{\prime}) \in \CB_{\tau}$, we have that $\Delta_{R}(X/X^{\prime}) \in {^{\perp}{_{S}T}}$, i.e. $\Gamma_{S}(\Delta_{R}(X/X^{\prime})) = 0$, and thus $\Gamma_{S}(\Delta_{R}(X)) = 0$.

(2) Similarly with (1).
\end{proof}

For a bimodule $_{S}T_{R}$, we set the following classes $\CX_{R}^{\prime} = \Ker\Delta_{R} \cap \presr R$ and $_{S}\CX^{\prime} = \Ker\Delta_{S} \cap S \presl$.

\begin{proposition}\label{rezultat_4} Let $_{S}T_{R}$ be a finitely cosilting bimodule. Then $$\Gamma_{R} : \CX_{R}^{\prime} \rightleftarrows {_{S}\CX^{\prime}} : \Gamma_{S}$$ is a duality.
\end{proposition}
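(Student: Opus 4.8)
The plan is to imitate the proof of the classical Cotilting Theorem, exploiting the duality $\Delta_{R}:\CY_{R}^{\prime}\rightleftarrows{_{S}\CY^{\prime}}:\Delta_{S}$ of Proposition \ref{rezultat_2} together with the vanishing statements of Proposition \ref{rezultat_3}. First I would record the standing facts. Since $_{S}T_{R}$ is faithfully balanced, the canonical ring map $R\to\End_{S}({_{S}T})$ is an isomorphism, so $T_{R}$ is faithful, and symmetrically $_{S}T$ is faithful; by Corollary \ref{Angeleri_Prposition_4.3_1} and its left-hand analogue, $R$ is right coherent, $S$ is left coherent, and both $T_{R}$ and $_{S}T$ are finitely presented; by Proposition \ref{Angeleri_Proposition_1.2.1} one has $\CY_{R}\subseteq\Refl_{\delta}$ and $_{S}\CY\subseteq\Refl_{\delta}$; and by Proposition \ref{Angeleri_Proposition_1.2.2}(3) the functors $\Gamma_{R}$ and $\Gamma_{S}$ carry finitely presented modules to finitely presented modules. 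The proof then splits into two tasks: (a) $\Gamma_{R}(\CX_{R}^{\prime})\subseteq{_{S}\CX^{\prime}}$ and $\Gamma_{S}({_{S}\CX^{\prime}})\subseteq\CX_{R}^{\prime}$; and (b) there are natural isomorphisms $\Gamma_{S}\Gamma_{R}\cong 1_{\CX_{R}^{\prime}}$ and $\Gamma_{R}\Gamma_{S}\cong 1_{{_{S}\CX^{\prime}}}$.

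For the heart of the argument, fix $X\in\CX_{R}^{\prime}$ and choose a projective presentation $0\to K\overset{\iota}\longrightarrow R^{m}\to X\to 0$. As $R$ is right coherent, $K$ is finitely presented; as $T_{R}$ is faithful, $R^{m}\in\Cogen(T_{R})$ and hence so is its submodule $K$, whence $K,R^{m}\in\CY_{R}^{\prime}$. Applying $\Delta_{R}$ and using $\Delta_{R}(X)=0$ (because $X\in\CX_{R}^{\prime}$) and $\Gamma_{R}(R^{m})=0$, I obtain a short exact sequence $0\to\Delta_{R}(R^{m})\to\Delta_{R}(K)\to\Gamma_{R}(X)\to 0$, in which $\Delta_{R}(R^{m})$ is a finite direct sum of copies of $T$ (as a left $S$-module) and $\Delta_{R}(K)\in{_{S}\CY^{\prime}}$. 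Now apply $\Delta_{S}$: since $\Gamma_{S}\Delta_{R}(K)=0$ by Proposition \ref{rezultat_3}(1), and since the evaluation maps $\delta_{K}$ and $\delta_{R^{m}}$ are isomorphisms (so that $\Delta_{S}\Delta_{R}(K)\cong K$, $\Delta_{S}\Delta_{R}(R^{m})\cong R^{m}$, and, by naturality of $\delta$, the map between them is again $\iota$), the induced long exact sequence collapses to $0\to\Delta_{S}\Gamma_{R}(X)\to K\overset{\iota}\longrightarrow R^{m}\to\Gamma_{S}\Gamma_{R}(X)\to 0$. Hence $\Delta_{S}\Gamma_{R}(X)=\Ker(\iota)=0$ and $\Gamma_{S}\Gamma_{R}(X)\cong\Coker(\iota)=X$; since $\Gamma_{R}(X)$ is finitely presented, this gives $\Gamma_{R}(X)\in{_{S}\CX^{\prime}}$.

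By the symmetric argument, with the roles of $R$ and $S$ (and of the injective copresentations $\zeta$ and $\tau$) interchanged, one obtains $\Gamma_{S}({_{S}\CX^{\prime}})\subseteq\CX_{R}^{\prime}$ and $\Gamma_{R}\Gamma_{S}(Y)\cong Y$ for every $Y\in{_{S}\CX^{\prime}}$, which settles (a). For (b), I would observe that every morphism in $\CX_{R}^{\prime}$ lifts to a morphism of projective presentations, uniquely up to homotopy; transporting such a lift through $\Delta_{R}$ and then $\Delta_{S}$ and identifying the outer terms by means of the natural transformation $\delta$ shows that the isomorphisms $\Gamma_{S}\Gamma_{R}(X)\cong X$ produced above are natural in $X$ — independence of the chosen lift is guaranteed because homotopic chain maps induce the same morphism on $\Gamma_{R}$, hence on $\Gamma_{S}\Gamma_{R}$. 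Combined with the symmetric statement, this exhibits $(\Gamma_{R},\Gamma_{S})$ as a duality between $\CX_{R}^{\prime}$ and $_{S}\CX^{\prime}$. The object-level computation is a routine long-exact-sequence chase once Propositions \ref{rezultat_2}, \ref{rezultat_3} and \ref{Angeleri_Proposition_1.2.1} are invoked; I expect the only genuine obstacle to be the naturality bookkeeping in (b), where one must verify that the non-canonical choice of projective presentations does not affect the resulting natural transformation.
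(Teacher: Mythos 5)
Your proof is correct, and it takes a somewhat different route from the paper. The paper's proof is short because it treats \cite[Lemma 4.1]{Angeleri_2000_2} as a black box: it verifies that the hypotheses of that lemma hold (namely that $\CY_{R}\subseteq\Refl_{\delta}$, that $\Gamma_{R}$ preserves finite presentation, and that $\Gamma_{S}\Delta_{R}$ annihilates finitely generated modules) and then invokes parts (1) and (3) of that lemma to conclude both $\Gamma_{R}(X)\in\Ker\Delta_{S}$ and $X\cong\Gamma_{S}\Gamma_{R}(X)$, including the naturality of the latter. Your argument instead unfolds the content of Angeleri's lemma into an explicit computation: starting from a projective presentation $0\to K\to R^{m}\to X\to 0$, applying $\Delta_{R}$ and then $\Delta_{S}$, using $\delta$-reflexivity of $K$ and $R^{m}$ together with $\Gamma_{S}\Delta_{R}(K)=0$ to collapse the resulting long exact sequence to $0\to\Delta_{S}\Gamma_{R}(X)\to K\to R^{m}\to\Gamma_{S}\Gamma_{R}(X)\to 0$, which at once yields $\Delta_{S}\Gamma_{R}(X)=0$ and $\Gamma_{S}\Gamma_{R}(X)\cong X$. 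Both routes rest on the same three prior results (Propositions \ref{Angeleri_Proposition_1.2.1}, \ref{Angeleri_Proposition_1.2.2} and \ref{rezultat_3}); the paper's citation is quicker, while your version is self-contained and makes it visible exactly where faithfulness, coherence and the vanishing of $\Gamma_{S}\Delta_{R}$ enter. Your treatment of naturality via lifts of morphisms to projective presentations and homotopy invariance is the standard mechanism and is exactly what is hidden inside the reference to the proof of Angeleri's Lemma 4.1, so nothing is missing.
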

\begin{proof} Let $X \in \CX_{R}^{\prime}$. Then $X \in \Ker\Delta_{R}$ and $X$ is finitely presented $R$-module. By Proposition \ref{Angeleri_Proposition_1.2.2}, $\Gamma_{R}(X)$ is finitely presented, i.e. $\Gamma_{R}(X) \in S\presl$. From Proposition \ref{rezultat_2}, $\CY_{R} \subseteq \Refl_{\delta}$, so that all modules in $\CY_{R}$ are $\delta$-reflexive. Applying \cite[Lemma 4.1 (1)]{Angeleri_2000_2}, we obtain that $\Gamma_{R}(X) \in \Ker\Delta_{S}$. Thus $\Gamma_{R}(X) \in {_{S}\CX^{\prime}}$, which shows us that $\Gamma_{R}(\CX_{R}^{\prime}) \subseteq {_{S}\CX^{\prime}}$. By Proposition \ref{rezultat_3}, $\Gamma_{S}\Delta_{R}$ carries finitely generated modules to zero, hence, by \cite[Lemma 4.1 (3)]{Angeleri_2000_2}, we have that $X \cong \Gamma_{S}\Gamma_{R}(X)$. Thus $X \in \Refl_{\gamma}$, where $\gamma : X \to \Gamma_{S}\Gamma_{R}(X)$ is the associated natural transformation (for more details, see the proof of \cite[Lemma 4.1]{Angeleri_2000_2}), which show us that $\CX_{R}^{\prime} \subseteq \Refl_{\gamma}$.

Analogous, we can prove that $\Gamma_{S}(_{S}\CX^{\prime}) \subseteq \CX_{R}^{\prime}$ and $_{S}\CX^{\prime} \subseteq \Refl_{\gamma}$.
\end{proof}

\begin{proposition} Let $_{S}T_{R}$ be a finitely cosilting bimodule. Then:
\begin{itemize}
   \item[(1)] $\Delta_{S}\Gamma_{R}$ carries finitely presented modules to zero;
   \item[(2)] $\Delta_{R}\Gamma_{S}$ carries finitely presented modules to zero.
\end{itemize}
\end{proposition}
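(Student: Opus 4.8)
The plan is to deduce both statements from the duality $\Gamma_{R} : \CX_{R}^{\prime} \rightleftarrows {_{S}\CX^{\prime}} : \Gamma_{S}$ of Proposition \ref{rezultat_4}, by reducing an arbitrary finitely presented module to its reject in $T$, which lands in $\CX_{R}^{\prime}$ (resp. in ${_{S}\CX^{\prime}}$) thanks to Lemma \ref{Angeleri_Lemma_2.5}.

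First I would fix a finitely presented right $R$-module $X$ and set $X^{\prime} = \Rej_{T}(X)$. Since $_{S}T_{R}$ is a finitely cosilting bimodule, Corollary \ref{Angeleri_Prposition_4.3_1} gives that $R$ is right coherent and $T_{R}$ is finitely presented, so Proposition \ref{Angeleri_Proposition_2.6_1} ensures that $\CY_{R}$ consists of finitely presented modules lying in $\CB_{\zeta}$; in particular the hypotheses of Lemma \ref{Angeleri_Lemma_2.5} are satisfied. Applying that lemma — with $T_{R}$ faithful, which holds because $_{S}T_{R}$ is faithfully balanced — I obtain $X^{\prime} \in \CX_{R}$, $X/X^{\prime} \in \CY_{R}$, and $\Gamma_{R}(X) \cong \Gamma_{R}(X^{\prime})$. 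The point needing care is that $X^{\prime}$ is not merely finitely generated but finitely presented: this follows since $R$ is right coherent and $X^{\prime}$ is a finitely generated submodule of the finitely presented module $X$ (alternatively, $X/X^{\prime} \in \CY_{R} = \CY_{R}^{\prime}$ is finitely presented by Proposition \ref{rezultat_2}, and one reads off finite presentation of $X^{\prime}$ from the short exact sequence $0 \to X^{\prime} \to X \to X/X^{\prime} \to 0$). Hence $X^{\prime} \in \Ker\Delta_{R} \cap \presr R = \CX_{R}^{\prime}$.

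Now Proposition \ref{rezultat_4} (or directly the argument in its proof) shows $\Gamma_{R}(X^{\prime}) \in {_{S}\CX^{\prime}} \subseteq \Ker\Delta_{S}$, so $\Delta_{S}\Gamma_{R}(X^{\prime}) = 0$; combined with $\Gamma_{R}(X) \cong \Gamma_{R}(X^{\prime})$ this yields $\Delta_{S}\Gamma_{R}(X) = 0$, which proves (1). Statement (2) is obtained by the symmetric argument with the roles of $R$ and $S$ interchanged: the left-hand analogue of Corollary \ref{Angeleri_Prposition_4.3_1} gives $S$ left coherent and $_{S}T$ finitely presented, and everything else runs verbatim. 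The main obstacle — modest as it is — is precisely the verification that $\Rej_{T}(X)$ is finitely presented, so that it genuinely lies in $\CX_{R}^{\prime}$ and Proposition \ref{rezultat_4} applies; the rest is bookkeeping with the reject exact sequence and the dualities already established.
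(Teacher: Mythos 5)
Your proof is correct and follows essentially the same route as the paper's: reduce to the reject $X' = \Rej_{T}(X)$ via Lemma \ref{Angeleri_Lemma_2.5}, verify $X' \in \CX_{R}^{\prime}$ using right coherence of $R$ (so that the finitely presented $X$ is a coherent module and its finitely generated submodule $X'$ is therefore finitely presented), and apply Proposition \ref{rezultat_4} to conclude $\Gamma_{R}(X') \in {_{S}\CX^{\prime}} \subseteq \Ker\Delta_{S}$. One small caveat: the parenthetical alternative you offer --- reading off finite presentation of $X'$ from the short exact sequence $0 \to X' \to X \to X/X' \to 0$ with $X$ and $X/X'$ finitely presented --- only yields that $X'$ is finitely generated, not finitely presented, unless one invokes coherence again, so it is not genuinely an alternative route; your primary argument is the right one and matches the paper exactly.
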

\begin{proof} Assume that $T_{R}$ (respectively, $_{S}T$) is finitely cosilting with respect to the injective copresentation $\zeta: Q_{0} \to Q_{1}$ (respectively, $\tau : S_{0} \to S_{1}$).

(1) By Corollary \ref{Angeleri_Prposition_4.3_1}, $R$ is a right coherent ring and $T_{R}$ is finitely presented. By Proposition \ref{Angeleri_Proposition_2.6_1}, the class $\CY_{R}$ consists of finitely presented modules belonging to the class $\CB_{\zeta}$.

Let $X$ be a finitely presented $R$-module. If we denote $\Rej_{T}(X)$ by $X^{\prime}$, we have, by Lemma \ref{Angeleri_Lemma_2.5}, that $X^{\prime} \in \CX_{R}$ and $\Gamma_{R}(X) \cong \Gamma_{R}(X^{\prime})$. We also have that $X$ is coherent, hence $X^{\prime}$ is finitely presented. It follows that $X^{\prime} \in \CX_{R}^{\prime}$. By Proposition \ref{rezultat_4}, $\Gamma_{R}(X^{\prime}) \in {_{S}\CX^{\prime}}$, hence $\Gamma_{R}(X^{\prime}) \in \Ker\Delta_{S}$. Therefore $\Gamma_{R}(X) \in \Ker\Delta_{S}$, i.e. $\Delta_{S}\Gamma_{R}(X) = 0$.

(2) It is proved similarly.
\end{proof}

Summarizing all the results above, now we are in position to state the main result of the paper, i.e. the following Cosilting Theorem.

\begin{theorem} \label{cosilting_theorem}
Let $_{S}T_{R}$ be a finitely cosilting bimodule. Then the following assertions hold.
\begin{itemize}
   \item[(1)] The pairs $(\CX_{R},\CY_{R})$ and $({_{S}\CX},{_{S}\CY})$ are torsion pairs in $\modr R$ and $S \lmod$, respectively.
   \item[(2)] $\Delta_{R} : \CY_{R}^{\prime} \rightleftarrows {_{S}\CY^{\prime}} : \Delta_{S}$ and $\Gamma_{R} : \CX_{R}^{\prime} \rightleftarrows {_{S}\CX^{\prime}} : \Gamma_{S}$ are dualities.
   \item[(3)] \begin{itemize}
                  \item[(i)] $\Gamma_{S}\Delta_{R}$ and $\Gamma_{R}\Delta_{S}$ carries finitely generated modules to zero.
                  \item[(ii)] $\Delta_{S}\Gamma_{R}$ and $\Delta_{R}\Gamma_{S}$ carries finitely presented modules to zero.
              \end{itemize}
\end{itemize}
\end{theorem}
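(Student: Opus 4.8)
The plan is to observe that Theorem \ref{cosilting_theorem} is essentially a bookkeeping statement: each of its three parts has already been established, verbatim or up to an immediate identification of classes, in one or two of the preceding propositions. So the proof reduces to quoting those results and checking that the hypothesis ``$_{S}T_{R}$ is a finitely cosilting bimodule'' supplies exactly what each cited proposition requires. No genuinely new argument is needed beyond assembling the pieces.

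Concretely, for part (1) I would invoke Proposition \ref{rezultat_1}, which asserts precisely that $(\CX_{R},\CY_{R})$ is a torsion pair in $\modr R$ and $({_{S}\CX},{_{S}\CY})$ is a torsion pair in $S\lmod$. For part (2), the duality $\Delta_{R}:\CY_{R}^{\prime}\rightleftarrows{_{S}\CY^{\prime}}:\Delta_{S}$ is Proposition \ref{rezultat_2} (which identifies $\CY_{R}$ with $\CY_{R}^{\prime}$ and ${_{S}\CY}$ with ${_{S}\CY^{\prime}}$ and then appeals to the duality of Proposition \ref{rezultat_0}), while $\Gamma_{R}:\CX_{R}^{\prime}\rightleftarrows{_{S}\CX^{\prime}}:\Gamma_{S}$ is Proposition \ref{rezultat_4}. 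For part (3)(i) I would cite Proposition \ref{rezultat_3}, and for part (3)(ii) the last proposition stated before the theorem. In each case one only has to note that the running hypothesis is literally ``finitely cosilting bimodule'', which is the standing assumption of all those statements.

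The only point deserving care --- and it is where the real content of the section lies, rather than in the theorem's assembly --- is that every cited result ultimately rests on two structural consequences of the bimodule hypothesis: first, that $_{S}T_{R}$ being faithfully balanced forces $T_{R}$ to be finitely presented and $R$ to be right coherent (Corollary \ref{Angeleri_Prposition_4.3_1}, obtained via the $\pi$-coherence produced by Theorem \ref{Angeleri_Theorem_3.7} applied to the $\add({_{S}T})$-preenvelopes guaranteed by finite generation of the $\Hom$'s); and second, that under this coherence $\CY_{R}$ is exactly the class of finitely presented modules lying in $\CB_{\zeta}$ (Proposition \ref{Angeleri_Proposition_2.6_1}) and is contained in $\Refl_{\delta}$ (Proposition \ref{Angeleri_Proposition_1.2.1}), with the symmetric statements on the $S$-side. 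Since all of these have already been proved, I do not expect any obstacle at the level of the theorem itself; if forced to name the subtlest link in the chain it would be this $\pi$-coherence/reflexivity step, because it is the one place where the two-sided (faithfully balanced) nature of $T$, rather than the one-sided finitely cosilting data, is genuinely used.
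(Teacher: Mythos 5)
Your proposal is correct and matches the paper exactly: the theorem is stated without a separate proof, preceded only by the phrase ``Summarizing all the results above,'' and each part follows verbatim from Propositions \ref{rezultat_1}, \ref{rezultat_2}, \ref{rezultat_4}, \ref{rezultat_3}, and the unlabeled proposition immediately before the theorem, precisely as you identify. Your observation about where the bimodule/faithfully balanced hypothesis does the real work (coherence via $\pi$-coherence, hence $\CY_{R} = \CY_{R}^{\prime}$ and reflexivity) is an accurate reading of the section's architecture.
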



\end{document}